\newtheorem{thm}{Theorem}[section]
\newtheorem{cor}[thm]{Corollary}
\newtheorem{lem}[thm]{Lemma}
\newtheorem{obs}[thm]{Observation}
\newtheorem{ques}{Question}
\theoremstyle{definition}
\newtheorem{dfn}[thm]{Definition}
\newtheorem{exa}[thm]{Example}
\newtheorem{alg}[thm]{Algorithm}
\newcommand{\F}{\mathcal{F}}
\newcommand{\G}{\mathcal{G}}
\newcommand{\C}{\mathcal{C}}
\newcommand{\D}{\mathcal{D}}
\renewcommand{\S}{\mathbb{S}}
\newcommand{\K}{\mathbb{K}}
\newcommand{\B}{\mathbb{B}}
\newcommand{\paw}{\mathrm{paw}}
\newcommand{\fourpan}{\textrm{$4$-pan}}
\newcommand{\cofourpan}{\textrm{co-$4$-pan}}
\title{Minimal forbidden sets for degree sequence characterizations}
\author{Michael D. Barrus\thanks{Research partially supported by Faculty Research Committee, Black Hills State University.}\\
Department of Mathematics\\
Brigham Young University\\
Provo, UT 84602 USA\\
\texttt{barrus@math.byu.edu}
\and
Stephen G. Hartke\thanks{Supported in part by National Science Foundation grant DMS-0914815.}\\
Department of Mathematics\\
University of Nebraska--Lincoln\\
Lincoln, NE 68588-0130 USA\\
\texttt{hartke@math.unl.edu}
}
\begin{document}
\maketitle

\begin{abstract}
Given a set $\F$ of graphs, a graph $G$ is $\F$-free if $G$ does not contain any member of $\F$ as an induced subgraph.  Barrus, Kumbhat, and Hartke~\cite{BarrusEtAl08} called $\F$ a \emph{degree-sequence-forcing (DSF) set} if, for each graph $G$ in the class $\C$ of $\F$-free graphs, every realization of the degree sequence of $G$ is also in $\C$. A DSF set is \emph{minimal} if no proper subset is also DSF.

In this paper, we present new properties of minimal DSF sets, including that every graph is in a minimal DSF set and that there are only finitely many DSF sets of cardinality $k$.  Using these properties and a computer search, we characterize the minimal DSF triples.
\medskip

\noindent\emph{Keywords}: degree-sequence-forcing set; forbidden subgraphs; degree sequence characterization; $2$-switch

\noindent\emph{AMS Subject Classification}: 05C75 (05C07)
\end{abstract}



\section{Introduction}

Given a collection $\F$ of graphs, a graph $G$ is \emph{$\F$-free} if $G$ contains no induced subgraph isomorphic to an element of $\F$, and the elements of $\F$ are \emph{forbidden subgraphs} for the class of $\F$-free graphs. 
Recently interest has developed in determining which small sets of induced subgraphs can be forbidden to produce graphs having special properties, such as $3$-colorability~\cite{Randerath2004}, traceability~\cite{GouldHarris99}, $k$-connected Hamiltonian~\cite{ChenEtAl12}, or the containment of a Hamiltonian cycle~\cite{FaudreeEtAl04}, 2-factor~\cite{AldredEtAl10}, or perfect matching~\cite{FujisawaEtAl2011,FujitaEtAl2006,OtaEtAl11,OtaSueiro2013}.

In this paper we investigate for which collections $\F$ of graphs the $\F$-free graphs are characterized by their degree sequences.  The \emph{degree sequence} $d(G)$ of a graph $G$ is the list $(d_1,d_2,\ldots,d_n)$ of its vertex degrees, written in non-increasing order, and any graph having degree sequence $d$ is a \emph{realization} of $d$. A graph class $\mathcal{C}$ has a \emph{degree sequence characterization} if it is possible to determine whether or not a graph $G$ is in $\mathcal{C}$ given only the degree sequence of $G$. Degree sequence characterizations of interesting graph classes are rare but valuable, as they often lead to very efficient recognition algorithms.

In~\cite{BarrusEtAl08} the authors and M.~Kumbhat studied the question of classifying graph classes with both forbidden subgraph and degree sequence characterizations. They defined a collection $\F$ of graphs to be \emph{degree-sequence-forcing} (or \emph{DSF}) if whenever any realization of a degree sequence $d$ is $\F$-free, every other realization of $d$ is $\F$-free as well.  In addition to several fundamental properties of DSF sets, \cite{BarrusEtAl08} also contains a characterization of the DSF sets with at most two graphs.

A DSF set $\F$ is \emph{minimal} if no proper subset of $\F$ is also DSF.  In this paper, we present new properties of minimal DSF sets.  We show that these sets are both plentiful and rare:  every graph belongs to a minimal DSF set (Theorem~\ref{thm: every graph in minimal}), but there are finitely many minimal DSF sets with $k$ graphs for any positive integer $k$ (Theorem~\ref{thm: bound on min DSF size}).  We also give a new characterization of all (including non-minimal) DSF sets (Theorem~\ref{thm: DSF iff D condition holds}).

The properties of minimal DSF sets give us an algorithmic framework to search for minimal DSF triples.  We perform a computer search for all such triples as described in Section~\ref{sec: the search}, and from this search obtain a complete list of all minimal DSF triples (Theorem~\ref{thm: tripleslist}).  In~\cite{NonMinimalTriples} the authors and M. Kumbhat characterize the non-minimal DSF triples; thus Theorem~\ref{thm: tripleslist} completes the characterization of all DSF triples.

The study of DSF sets is motivated by several well-known classes of graphs.  The sets $\{K_2\}$ and $\{2K_1\}$ are the forbidden sets for the edgeless and the complete graphs, respectively, which have trivial degree sequence characterizations. More interesting, $\{2K_2, C_4, P_4\}$, $\{2K_2, C_4, C_5\}$, and $\{2K_2,C_4\}$ are sets of forbidden induced subgraphs for the threshold graphs~\cite{ChvatalHammer73}, split graphs~\cite{FoldesHammer76}, and pseudo-split graphs~\cite{MaffrayPreissmann94}, respectively; each of these classes has a degree sequence characterization (\cite{HammerEtAl78}, \cite{HammerSimeone81,Tyshkevich80,TyshkevichEtAl81}, \cite{MaffrayPreissmann94}). Matroidal and matrogenic graphs also have characterizations in terms of both degree sequences and forbidden subgraphs~\cite{FoldesHammerMatroid,MarchioroEtAl84,Peled77,Tyshkevich84}.

This paper is organized as follows.  Section~\ref{sec: minimal properties} presents the new properties of minimal DSF sets, while Section~\ref{sec: the search} presents additional properties and develops the computer algorithm for searching for minimal DSF triples.  The last two subsections of Section~\ref{sec: the search} contain the results of the computer search.  Finally, we conclude with several open questions about DSF sets in Section~\ref{sec: conclusion}.


\section{Minimal DSF sets} \label{sec: minimal properties}

In this section we establish fundamental properties of minimal DSF sets. We show that every graph belongs to a minimal DSF set, and we determine bounds on the numbers and sizes of these sets and the graphs they contain.

We begin with some definitions. Let $V(G)$ and $E(G)$ denote the vertex and edge sets, respectively, of a graph $G$. The \emph{order} of $G$ is defined as $|V(G)|$. For $W \subseteq V(G)$, we use $G[W]$ to denote the induced subgraph of $G$ with vertex set $W$. The complement of a graph $G$ is denoted by $\overline{G}$. We write the disjoint union of graphs $G$ and $H$ as $G+H$, and we indicate the disjoint union of $m$ copies of $G$ by $mG$. We use $K_n$, $P_n$, and $C_n$, respectively, to denote the complete graph, path, and cycle on $n$ vertices, and we use $K_{m,n}$ to denote the complete bipartite graph with partite sets of sizes $m$ and $n$. The \emph{paw} is the complement of $P_3+K_1$. The \emph{4-pan} is the graph obtained by adding a pendant vertex to a 4-cycle; the \emph{co-4-pan} is the complement of the 4-pan. The \emph{house} is the complement of $P_5$. We illustrate these named graphs in Figure~\ref{fig: small graphs}.
\begin{figure}
\centering
\includegraphics{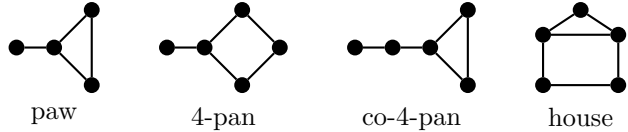}
\caption{Small named graphs.}
\label{fig: small graphs}
\end{figure}

Given vertices $a,b,c,d$ in a graph $G$ with $ab,cd \in E(G)$ and $ad,bc \notin E(G)$, a \emph{2-switch} is the operation of deleting edges $ab$ and $cd$ from $G$ and adding edges $ad$ and $bc$. As shown by Fulkerson, Hoffman, and McAndrew~\cite{FulkersonEtAl65}, two graphs have the same degree sequence if and only if one can be obtained from the other via a sequence of 2-switches.

To show that a set $\F$ of graphs is not DSF, it suffices to produce two graphs $H$ and $H'$ having the same degree sequence in which $H$ contains an element of $\F$ as an induced subgraph while $H'$ is $\F$-free. We call such a pair $(H,H')$ an \emph{$\F$-breaking pair}. In deciding whether a set $\F$ is DSF, one does not have to look far for $\F$-breaking pairs, if they exist.

\begin{lem} \label{lem: at most 2 more vtcs}
If $\F$ is a set of graphs that is not DSF, then there exists an $\F$-breaking pair $(H,H')$ such that $H$ and $H'$ each have at most $|V(G)| + 2$ vertices, where $G$ is a graph in $\F$ having the most vertices.
\end{lem}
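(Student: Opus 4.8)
The plan is to reduce to a single 2-switch and then argue that such a switch must overlap a forbidden induced subgraph in at least two vertices, which controls the number of extra vertices needed.

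First I would exploit the 2-switch characterization. Since $\F$ is not DSF, there is an $\F$-breaking pair $(H_0,H_0')$ in which $H_0$ contains a forbidden induced subgraph and $H_0'$ is $\F$-free. By the theorem of Fulkerson, Hoffman, and McAndrew, one can pass from $H_0$ to $H_0'$ through a sequence of 2-switches $H_0 = F_0, F_1, \ldots, F_m = H_0'$. Because $F_0$ is not $\F$-free while $F_m$ is, there is a first index $i$ with $F_i$ being $\F$-free; then $(F_{i-1}, F_i)$ is itself an $\F$-breaking pair, and $H := F_{i-1}$ and $H' := F_i$ differ by a single 2-switch, say on vertices $a,b,c,d$ with $ab, cd \in E(H)$ and $ad, bc \notin E(H)$. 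This lets me work with one 2-switch without loss of generality.

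Next I would fix an induced copy of some $G \in \F$ inside $H$, on a vertex set $W$ with $|W| = |V(G)|$ at most the maximum order attained in $\F$. The crucial observation is that the 2-switch must change at least one adjacency within $W$: since $H'$ is $\F$-free, so is every induced subgraph $H'[W]$, whereas $H[W]$ is a copy of a member of $\F$ and hence not $\F$-free. Thus $H[W] \neq H'[W]$ as edge sets on $W$, and the only adjacencies the switch alters are the four pairs $ab, cd, ad, bc$. Consequently at least one of these pairs lies entirely in $W$, which forces $|W \cap \{a,b,c,d\}| \ge 2$.

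Finally I would set $U = W \cup \{a,b,c,d\}$. Because at least two of the switch vertices already lie in $W$, we get $|U| \le |W| + 2 \le |V(G)| + 2$, with $G$ the largest graph in $\F$. Since $\{a,b,c,d\} \subseteq U$, the graphs $H[U]$ and $H'[U]$ differ by the very same 2-switch and therefore share a degree sequence; moreover $H[U]$ contains the induced copy $H[W]$ of a member of $\F$, while $H'[U]$ is $\F$-free as an induced subgraph of the $\F$-free graph $H'$. Hence $(H[U], H'[U])$ is the required $\F$-breaking pair of order at most $|V(G)| + 2$. I expect the main obstacle to be the counting step that pins down $|W \cap \{a,b,c,d\}| \ge 2$; the insight making it work is that $\F$-freeness is inherited by induced subgraphs, so the switch is forced to modify an edge inside the forbidden copy, and a single modified edge already contributes two switch vertices to $W$, leaving only two vertices to adjoin.
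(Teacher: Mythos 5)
Your proof is correct and takes essentially the same approach as the paper's: isolate a single 2-switch in the Fulkerson--Hoffman--McAndrew sequence at the moment $\F$-freeness is attained, observe that the switch must alter an adjacency inside the forbidden induced copy (forcing at least two of the four switch vertices into that copy), and restrict both graphs to the union of the copy with the switch vertices to obtain a breaking pair on at most $|V(G)|+2$ vertices. The only cosmetic difference is that you pick the \emph{first} $\F$-free graph in the 2-switch sequence while the paper picks the \emph{last} graph inducing a member of $\F$; both choices produce the same consecutive breaking pair, and your explicit appeal to the hereditariness of $\F$-freeness to pin down $|W \cap \{a,b,c,d\}| \geq 2$ matches the paper's reasoning.
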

\begin{proof}
Since $\F$ is not DSF, by definition there exists an $\F$-breaking pair $(J, J')$ of graphs. Since $J$ and $J'$ have the same degree sequence, there exists a sequence $J = J_0, \dots, J_k = J'$ of graphs in which $J_i$ is obtained via a 2-switch on $J_{i-1}$ for $i\in\{1,\dots, k\}$. Define $\ell$ to be the largest index such that $J_\ell$ induces an element $G$ of $\F$, so that $(J_\ell, J_{\ell+1})$ is an $\F$-breaking pair. Let $V$ denote the vertex set of an induced copy of $G$ in $J_\ell$, and let $W$ denote the set of four vertices involved in the 2-switch transforming $J_\ell$ into $J_{\ell+1}$. Since $G$ is not induced on $V$ in $J_{\ell+1}$, the 2-switch performed must add an edge to or delete an edge from $J_\ell[V]$; hence $|W\cap V| \geq 2$ and $|V \cup W| \leq |V|+2$. Thus $(J_\ell[V\cup W], J_{\ell+1}[V \cup W])$ is an $\F$-breaking pair on at most $|V(G)| + 2$ vertices. Taking $G$ to be a graph in $\F$ with the most vertices yields the result.
\end{proof}

Given a finite set $\F$ of graphs, Lemma~\ref{lem: at most 2 more vtcs} implies that determining if $\F$ is a DSF set can be done in finite time, as there are only a finite number of pairs $(H,H')$ to check for $\F$-breaking pairs.

When we restrict our attention to \emph{minimal} DSF sets, Lemma~\ref{lem: at most 2 more vtcs} imposes restrictions on ``gaps'' in the sizes of vertex sets.

\begin{lem} \label{lem: min sizes differ by at most 2}
Let $\F = \{F_1, \dots, F_k\}$, where the graphs in $\F$ are indexed in order of the sizes of their vertex sets, from smallest to largest. If $\F$ is a minimal DSF set, then $|V(F_{i+1})|-|V(F_i)|\leq 2$ for all $i \in \{1, \dots , k-1\}$.
\end{lem}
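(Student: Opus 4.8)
The plan is to argue by contradiction and exploit the ``$+2$'' slack in Lemma~\ref{lem: at most 2 more vtcs}. Suppose $\F$ is a minimal DSF set but that $|V(F_{i+1})| - |V(F_i)| \geq 3$ for some index $i \in \{1, \dots, k-1\}$. I would split $\F$ at this gap into the ``small'' subfamily $\F' = \{F_1, \dots, F_i\}$ and the ``large'' subfamily $\{F_{i+1}, \dots, F_k\}$, noting that $F_i$ is a graph of largest order in $\F'$.

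Since $\F'$ is a proper subset of $\F$ and $\F$ is minimal, $\F'$ cannot be DSF. I would then apply Lemma~\ref{lem: at most 2 more vtcs} to $\F'$ to obtain an $\F'$-breaking pair $(H, H')$ in which each of $H$ and $H'$ has at most $|V(F_i)| + 2$ vertices. Recall that this means $H$ and $H'$ share a degree sequence, $H$ induces some member of $\F'$, and $H'$ is $\F'$-free.

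The crux is to upgrade this $\F'$-breaking pair to a genuine $\F$-breaking pair, which would contradict $\F$ being DSF. Because $H$ induces a member of $\F' \subseteq \F$, the graph $H$ is not $\F$-free. It remains to verify that $H'$ is $\F$-free. Since $H'$ is already $\F'$-free, I only need to rule out that $H'$ induces one of the large graphs $F_j$ with $j > i$. But every such $F_j$ satisfies $|V(F_j)| \geq |V(F_{i+1})| \geq |V(F_i)| + 3 > |V(F_i)| + 2 \geq |V(H')|$, so $H'$ is simply too small to contain $F_j$ as an induced subgraph. Hence $H'$ is $\F$-free, making $(H, H')$ an $\F$-breaking pair and contradicting the assumption that $\F$ is DSF.

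The step to get exactly right is this final comparison of vertex counts: the ``$+2$'' bound in Lemma~\ref{lem: at most 2 more vtcs} is precisely what a gap of size at least $3$ defeats, which is why the statement's bound is $2$ and not some larger constant. Everything else is bookkeeping, with minimality supplying the failure of $\F'$ to be DSF, Lemma~\ref{lem: at most 2 more vtcs} bounding the order of the breaking pair, and the gap hypothesis guaranteeing that no large forbidden graph can appear in the small realization $H'$.
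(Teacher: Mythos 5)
Your proof is correct and follows essentially the same route as the paper: minimality makes $\{F_1,\dots,F_i\}$ non-DSF, Lemma~\ref{lem: at most 2 more vtcs} supplies a breaking pair on at most $|V(F_i)|+2$ vertices, and the DSF property of $\F$ forces $H'$ to induce some $F_j$ with $j>i$, giving the size bound. The only cosmetic difference is that you argue by contradiction from a gap of size at least $3$, whereas the paper reads off the inequality $|V(F_{i+1})| \leq |V(H')| \leq |V(F_i)|+2$ directly.
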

\begin{proof}
Let $\F$ be a minimal DSF set with elements indexed as described. For any $i \in \{1, \dots, k-1\}$, let $\G$ be the set $\{F_1, \dots, F_i\}$. Since $\F$ is minimal, $\G$ is not DSF. By Lemma~\ref{lem: at most 2 more vtcs} there exists a $\G$-breaking pair $(H,H')$ such that $|V(H')| \leq |V(F_i)| + 2$. Since $\F$ is DSF, $H'$ induces $F_j$ for some $j > i$. Thus \[|V(F_{i+1})| - |V(F_i)| \leq |V(F_j)| - |V(F_i)| \leq |V(H')| - |V(F_i)| \leq 2,\] as claimed.
\end{proof}

A similar result applies to the sizes of edge sets.

\begin{lem}~\label{lem: min edge sizes differ by}
Let $\F = \{F'_1, \dots, F'_k\}$, where the graphs in $\F$ are indexed in order of the sizes of their edge sets, from smallest to largest. If $\F$ is a minimal DSF set, then $|E(F'_{i+1})| \leq \max\{|E(F'_j)| + 2|V(F'_j)|\;:\;j \leq i\}$ for all $i \in \{1, \dots, k-1\}$.
\end{lem}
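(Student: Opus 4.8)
The plan is to mirror the proof of Lemma~\ref{lem: min sizes differ by at most 2}, but to track edges rather than vertices. First I would fix $i \in \{1, \dots, k-1\}$ and set $\G = \{F'_1, \dots, F'_i\}$, the $i$ members of $\F$ of smallest edge count. Minimality of $\F$ forces $\G$ to fail to be DSF, so a $\G$-breaking pair exists. As before I would use the construction underlying Lemma~\ref{lem: at most 2 more vtcs}: expressing a $\G$-breaking pair as a $2$-switch sequence and choosing the last term $J_\ell$ inducing a member $G$ of $\G$ on a vertex set $V$, I obtain the $\G$-breaking pair $(H, H') = (J_\ell[V \cup W], J_{\ell+1}[V\cup W])$, where $W$ is the set of four vertices of the $2$-switch. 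The features I want to retain are that $H$ induces $G$ on $V$, that $H$ is obtained from $G$ by adding at most two vertices (since $|W \setminus V| \le 2$), and that $H'$ arises from $H$ by a single $2$-switch, so $|E(H')| = |E(H)|$.

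Next I would run the DSF chain exactly as in the earlier lemmas. Since $H$ induces $G \in \G \subseteq \F$, $H$ is not $\F$-free; as $\F$ is DSF and $H'$ shares the degree sequence of $H$, $H'$ is not $\F$-free either. But $H'$ is $\G$-free, so the member of $\F$ it induces is some $F'_j$ with $j > i$. Because edge counts are non-decreasing in the index, $|E(F'_{i+1})| \le |E(F'_j)|$, and since $F'_j$ is an induced subgraph of $H'$ we have $|E(F'_j)| \le |E(H')| = |E(H)|$. It therefore suffices to bound $|E(H)|$ by $|E(G)| + 2|V(G)|$; as $G = F'_m$ for some $m \le i$, this yields $|E(F'_{i+1})| \le |E(F'_m)| + 2|V(F'_m)| \le \max\{|E(F'_j)|+2|V(F'_j)|\;:\;j \le i\}$, as desired.

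The crux, and the step I expect to require the most care, is the edge estimate $|E(H)| \le |E(G)| + 2|V(G)|$. Writing $H$ as $G$ together with the at most two vertices of $W \setminus V$, the edges of $H$ are the $|E(G)|$ edges within $V$ plus the edges incident to $W \setminus V$. A naive count of the latter allows up to $2|V(G)|$ edges from the new vertices into $V$ plus one edge between the two new vertices, giving $2|V(G)| + 1$ and missing the stated bound by one. To remove this slack I would use the structure of the $2$-switch on $\{a,b,c,d\}$, where $ab, cd \in E$ and $ad, bc \notin E$. If the two new vertices form the non-edge $\{a,d\}$ or $\{b,c\}$, they contribute no edge between themselves, leaving at most $2|V(G)|$ edges into $V$. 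If instead they form one of the edges $\{a,b\}$ or $\{c,d\}$, then the forbidden non-edges $ad$ and $bc$ each delete one potential neighbor in $V$ from a distinct new vertex, so the new vertices send at most $2|V(G)| - 2$ edges into $V$, which together with their single internal edge totals at most $2|V(G)| - 1$.

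Finally, I would confirm that the remaining placements of the new vertices cannot occur: if $W \setminus V$ were $\{a,c\}$ or $\{b,d\}$, then none of the four pairs altered by the $2$-switch would lie entirely in $V$, so $J_\ell[V]$ and $J_{\ell+1}[V]$ would coincide and $G$ would still be induced in $J_{\ell+1}$, contradicting the maximality of $\ell$. The cases $|W\setminus V| \le 1$ are easier, contributing at most $|V(G)|$ extra edges. Combining these, $|E(H)| \le |E(G)| + 2|V(G)|$ in every case, which completes the argument.
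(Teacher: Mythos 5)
Your proof is correct and follows the paper's argument essentially step for step: you reduce via the construction of Lemma~\ref{lem: at most 2 more vtcs} to a $\G$-breaking pair $(H,H')$ in which $H$ consists of an induced copy of some $G = F'_m$ with $m \le i$ plus at most two switch vertices, and then chain $|E(F'_{i+1})| \le |E(F'_j)| \le |E(H')| = |E(H)| \le |E(F'_m)| + 2|V(F'_m)|$ exactly as the paper does. The only difference is local and cosmetic: where you establish $|E(H)| \le |E(G)| + 2|V(G)|$ by a careful case analysis on which pair of $\{a,b,c,d\}$ the new vertices form (correctly ruling out the diagonal placements $\{a,c\}$, $\{b,d\}$ by the maximality of $\ell$), the paper obtains the same bound in one line by observing that every vertex involved in the $2$-switch has a non-neighbor and hence is not dominating in $H$, so each of the at most two new vertices has degree at most $|V(G)|$.
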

\begin{proof}
Let $\F$ be a minimal DSF set with elements indexed as described. For any $i \in \{1, \dots, k-1\}$, let $\G'$ be the set $\{F'_1, \dots, F'_i\}$. Since $\F$ is minimal, $\G'$ is not DSF. As in the proof of Lemma~\ref{lem: at most 2 more vtcs}, there exists a $\G'$-breaking pair $(H,H')$ such that every vertex of $H$ not belonging to a chosen induced copy of some $F'_j$ in $\G'$ belongs to one of the edges involved in the 2-switch that changes $H$ into $H'$. There are at most two such vertices, and these vertices are contained in all edges of $H$ that do not belong to the chosen copy of $F'_j$. Thus $H$ contains at most $|E(F'_j)| + 2|V(F'_j)|$ edges (any vertex not in the copy of $F'_j$ is involved in the 2-switch changing $H$ into $H'$ and hence is not a dominating vertex in $H$). The graph $H'$ has the same number of edges as $H$, and since $\F$ is DSF, $H'$ induces $F'_\ell$ for some $\ell > i$. It follows that \[|E(F'_{i+1})| \leq |E(F'_\ell)|\leq |E(H')|\leq |E(F'_j)| + 2|V(F'_j)|,\]
which completes the proof.
\end{proof}
The previous result and induction yield the following.
\begin{cor}\label{cor: min edge sizes differ by, simpler}
Let $\F = \{F'_1, \dots, F'_k\}$, where the graphs in $\F$ are indexed in order of the sizes of their edge sets, from smallest to largest. If $\F$ is a minimal DSF set, then $|E(F'_{i+1})| \leq |E(F'_1)|+2\sum_{j=1}^i |V(F'_j)|$ for all $i \in \{2, \dots, k\}$.
\end{cor}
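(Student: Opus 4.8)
The plan is to induct on $i$, using Lemma~\ref{lem: min edge sizes differ by} to drive the inductive step. Recall that the graphs are indexed by increasing edge count, so $|E(F'_1)| \leq |E(F'_2)| \leq \cdots \leq |E(F'_k)|$, and our goal is the uniform bound $|E(F'_{i+1})| \leq |E(F'_1)| + 2\sum_{j=1}^i |V(F'_j)|$. The essential observation is that the maximum appearing in Lemma~\ref{lem: min edge sizes differ by} can be ``telescoped'' into a single partial sum once we feed the earlier bounds back into it.

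For the base case $i = 1$, Lemma~\ref{lem: min edge sizes differ by} gives $|E(F'_2)| \leq \max\{|E(F'_j)| + 2|V(F'_j)| : j \leq 1\} = |E(F'_1)| + 2|V(F'_1)|$, which is exactly the asserted bound when $i = 1$.

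For the inductive step, assume the bound holds at all smaller indices; concretely, assume $|E(F'_m)| \leq |E(F'_1)| + 2\sum_{j=1}^{m-1}|V(F'_j)|$ for every $m$ with $2 \leq m \leq i$. Applying Lemma~\ref{lem: min edge sizes differ by} yields $|E(F'_{i+1})| \leq \max\{|E(F'_j)| + 2|V(F'_j)| : j \leq i\}$, and I let $j$ be an index achieving this maximum. The key step is to verify that $|E(F'_j)| + 2|V(F'_j)| \leq |E(F'_1)| + 2\sum_{\ell=1}^i |V(F'_\ell)|$ for this $j$, which I do by a case split. If $j = 1$, the inequality is immediate, since the right-hand side exceeds $|E(F'_1)| + 2|V(F'_1)|$ by a sum of nonnegative vertex counts. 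If $2 \leq j \leq i$, the induction hypothesis gives $|E(F'_j)| \leq |E(F'_1)| + 2\sum_{\ell=1}^{j-1}|V(F'_\ell)|$, and adding $2|V(F'_j)|$ absorbs the extra term into the partial sum to give $|E(F'_j)| + 2|V(F'_j)| \leq |E(F'_1)| + 2\sum_{\ell=1}^{j}|V(F'_\ell)| \leq |E(F'_1)| + 2\sum_{\ell=1}^{i}|V(F'_\ell)|$, where the final inequality uses $j \leq i$ together with nonnegativity of the vertex counts. In either case the bound for index $i$ follows, completing the induction.

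I expect no serious obstacle, as the argument is largely bookkeeping. The one point requiring care is that the maximum in Lemma~\ref{lem: min edge sizes differ by} ranges over all $j \leq i$ rather than only $j = i$, so the induction hypothesis must be invoked at the specific maximizing index $j$, and one must confirm monotonicity of the partial sums in $j$ to conclude that the bound attached to index $j$ is dominated by the one attached to index $i$.
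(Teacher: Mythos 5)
Your proof is correct and matches the paper's intent exactly: the paper gives no written argument beyond the remark that ``the previous result and induction yield the following,'' and your strong induction, with the case split on the maximizing index $j$ in the bound of Lemma~\ref{lem: min edge sizes differ by}, is precisely that intended argument. (Incidentally, your version also quietly repairs the corollary's off-by-one statement, since as printed the range $i \in \{2,\dots,k\}$ would reference the nonexistent graph $F'_{k+1}$; the claim you prove, for $i \in \{1,\dots,k-1\}$, is the one actually used later in the paper.)
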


Thus minimal DSF sets with few elements contain graphs that do not differ greatly in the numbers of vertices and edges they contain. We now show that small minimal DSF sets must therefore contain small graphs. We first recall a result from~\cite{BarrusEtAl08}.

\begin{thm}[\cite{BarrusEtAl08}] \label{thm: forest of stars}
Every DSF set must contain a forest in which each component is a star, as well as the complement of a forest of stars.
\end{thm}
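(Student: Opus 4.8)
The plan is to prove the two halves separately but to reduce them to a single statement using complementation. First I would observe that $\F$ is DSF if and only if $\overline{\F} = \{\overline{F} : F \in \F\}$ is DSF: a graph $G$ is $\overline{\F}$-free exactly when $\overline{G}$ is $\F$-free, and the realizations of $d(\overline{G})$ are precisely the complements of the realizations of $d(G)$, so the ``all realizations agree'' property transfers across complementation. Since the complement of a forest of stars is the complement of a forest of stars, it suffices to prove the single claim that every DSF set contains a forest of stars; applying this to $\overline{\F}$ then yields the complement half.

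To prove that claim I would argue the contrapositive: if $\F$ contains no forest of stars, then $\F$ is not DSF. The key structural fact is that forests of stars form a hereditary class, since an induced subgraph of a star forest is again a star forest (a disjoint union of smaller stars and isolated vertices). Hence, under the assumption that no member of $\F$ is a forest of stars, every forest of stars is automatically $\F$-free. It therefore suffices to exhibit a single $\F$-breaking pair $(H,H')$ in which $H'$ is a forest of stars and $H$ induces some member of $\F$.

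The heart of the argument is the following construction. I would use the elementary characterization that a non-increasing sequence is the degree sequence of a forest of stars precisely when, writing $c_1,\dots,c_t$ for its entries that are at least $2$ and $b$ for the number of entries equal to $1$, one has $\sum_j c_j \le b$ with $b - \sum_j c_j$ even (the entries at least $2$ are the star centers, and the leftover degree-$1$ vertices pair up into copies of $K_2$). Now take any $F \in \F$, let $b$ denote its number of degree-$1$ vertices, and set $H = F + tK_2$ for a suitable $t \ge 0$. Adding a disjoint $K_2$ leaves the entries of value at least $2$ untouched while increasing the count of degree-$1$ entries by $2$, so choosing any $t \ge \max\{|E(F)| - b,\, 0\}$ forces $d(H)$ to satisfy the inequality above; the parity condition holds automatically, since both sides change the degree sum by an even amount. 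Thus $d(H)$ is realizable as a forest of stars $H'$, while $H$ itself induces $F$ and so is not $\F$-free. The pair $(H,H')$ is then $\F$-breaking, contradicting that $\F$ is DSF.

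I expect the main obstacle to be pinning down and justifying the exact degree-sequence condition for forests of stars and verifying that padding with copies of $K_2$ meets it, in particular keeping careful track of which entries act as centers versus leaves and confirming the parity constraint. The remaining steps (the complementation reduction, the hereditary observation, and the assembly of the breaking pair) are then routine.
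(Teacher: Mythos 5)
Your proof is correct, but note that this paper does not actually prove the statement: Theorem~\ref{thm: forest of stars} is imported from~\cite{BarrusEtAl08}, and the remark following Theorem~\ref{thm: DSF contains disjoint unions} indicates that the original proofs of such results were carried out in a framework of monotone parameters (for instance, edit distance to the hereditary class in question), whereas you give a direct construction. Your route --- reduce the complement half to the star-forest half via complementation (this is exactly Lemma~\ref{lem: complements}), then argue the contrapositive by padding an arbitrary $F \in \F$ with disjoint edges --- checks out: the degree-sequence criterion you state for forests of stars is right (every vertex of degree at least $2$ in a star forest must be a star center, so one needs $\sum_j c_j \le b$ with even slack, and the parity is automatic for any graphical sequence because $\sum_j c_j + b$ is even); the sum of the entries of $d(F)$ that are at least $2$ equals $2|E(F)| - b$, so taking $t \ge \max\{|E(F)| - b,\, 0\}$ copies of $K_2$ indeed yields $\sum_j c_j \le b + 2t$; and the hereditary character of star forests guarantees that the star-forest realization $H'$ is $\F$-free, so $(H,H')$ is an $\F$-breaking pair. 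In fact your construction is precisely the template this paper uses in its direct proof of Theorem~\ref{thm: DSF contains disjoint unions} (pad $F$ so that its degree sequence acquires a realization in the target class and invoke the DSF property), so your argument matches the present paper's style even though it differs from the cited source's. What the monotone-parameter framework buys is uniformity --- one mechanism covers star forests, unions of cliques, complete bipartite graphs, and so on at once --- while your argument is elementary and self-contained. Two cosmetic points: you use the symbol $t$ both for the number of sequence entries at least $2$ and for the number of added copies of $K_2$, and the argument tacitly assumes $\F \neq \emptyset$ (as the theorem implicitly must, since the empty set vacuously satisfies the DSF definition).
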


\begin{thm}~\label{thm: bound on min DSF size}
If $\F$ is a minimal DSF set of $k$ graphs, then the number of vertices in any element of $\F$ is at most \[4k - \frac{3}{2} + \sqrt{8k^2 +2k - \frac{31}{4}};\] hence there are finitely many minimal DSF $k$-sets.
\end{thm}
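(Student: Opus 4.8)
The plan is to extract from $\F$ one sparse and one dense graph via Theorem~\ref{thm: forest of stars}, to bound the number of edges the dense graph must have from below and may have from above, and to turn the resulting inequality into a quadratic bound on the order $n$ of the largest element of $\F$. Write $k=|\F|$ and let $n$ be the maximum order of a graph in $\F$; we may assume $n\ge 4k$, since the quantity in the statement is always at least $4k$ and the bound is immediate otherwise. By Theorem~\ref{thm: forest of stars}, $\F$ contains a forest of stars $S$ and the complement $\overline{T}$ of a forest of stars $T$. A forest of stars on $p$ vertices has at most $p-1$ edges, so the minimum number $e_{\min}$ of edges of a graph in $\F$ satisfies $e_{\min}\le |V(S)|-1\le n-1$. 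Dually, if $\overline{T}$ has $t$ vertices then $|E(\overline T)|\ge \binom{t}{2}-(t-1)=\binom{t-1}{2}$, and by Lemma~\ref{lem: min sizes differ by at most 2} every element of $\F$, in particular $\overline T$, has at least $n-2(k-1)$ vertices, so $t\ge n-2(k-1)$.

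Next I would bound the edges of $\overline T$ from above. Applying Corollary~\ref{cor: min edge sizes differ by, simpler} at the position of $\overline T$ in the edge ordering gives $|E(\overline T)|\le e_{\min}+2\sum_{F}|V(F)|$, where the sum runs over the graphs of $\F$ having fewer edges than $\overline T$; since these are all distinct from $\overline T$, the sum is at most $V_{\mathrm{tot}}-t$, where $V_{\mathrm{tot}}=\sum_{F\in\F}|V(F)|$. Combining the two estimates yields $\binom{t-1}{2}\le (n-1)+2\,(V_{\mathrm{tot}}-t)$.

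The crux is to bound $V_{\mathrm{tot}}$, and here the two lemmas interact. By Lemma~\ref{lem: min sizes differ by at most 2} the orders of the $k$ graphs, listed in nondecreasing order, climb from their minimum (at most $t$) up to $n$ in steps of at most $2$; a water-filling argument shows that, subject to this constraint and to having an order equal to $t$, the sum $V_{\mathrm{tot}}$ is largest for the configuration $t,t+2,\dots,n,n,\dots,n$. Writing $t=n-2m$ with $0\le m\le k-1$ (the case of odd $n-t$ being analogous and giving an even smaller sum), this gives $V_{\mathrm{tot}}\le kn-m(m+1)$, hence $V_{\mathrm{tot}}-t\le (k-1)n-m(m-1)$. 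This coupling is the main obstacle and the whole point of the argument: a small $t$ weakens the lower bound $\binom{t-1}{2}$, but simultaneously, through Lemma~\ref{lem: min sizes differ by at most 2}, it drags the other orders downward and shrinks $V_{\mathrm{tot}}$, so the two effects must be balanced rather than bounded separately (the naive bound $V_{\mathrm{tot}}-t\le (k-1)n$ loses exactly the term $2(k-1)(k-2)$ that separates a weaker estimate from the stated one).

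Substituting gives $\binom{n-2m-1}{2}\le (n-1)+2\bigl[(k-1)n-m(m-1)\bigr]$. Finally I would observe that, for $n\ge 4k$, the difference (left side) minus (right side) has derivative $8m+1-2n<0$ in $m$ on $\{0,\dots,k-1\}$, so the inequality is tightest at $m=k-1$, that is, at $t=n-2(k-1)$, corresponding to the maximally spread configuration $n-2(k-1),\,n-2(k-1)+2,\dots,n$. Setting $m=k-1$ and expanding reduces everything to \[ n^2-(8k-3)n+8k^2-14k+10\le 0, \] whose larger root is $4k-\tfrac32+\sqrt{8k^2+2k-\tfrac{31}{4}}$, giving the claimed bound. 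Since only finitely many graphs have order below this bound, there are finitely many candidate graphs and hence finitely many minimal DSF $k$-sets.
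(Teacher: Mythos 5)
Your proposal is correct and is essentially the paper's own argument in different coordinates: both rest on Theorem~\ref{thm: forest of stars}, Lemma~\ref{lem: min sizes differ by at most 2}, and Corollary~\ref{cor: min edge sizes differ by, simpler}, and your final quadratic $n^{2}-(8k-3)n+8k^{2}-14k+10\le 0$ in the maximum order $n$ is exactly the paper's quadratic in the minimum order $n_1$ rewritten via the substitution $n=n_1+2(k-1)$. The only real difference is bookkeeping: the paper anchors every estimate at $n_1$ --- its sum $\sum_{i=2}^{k}\bigl(n_1+2(i-1)\bigr)$ is precisely your water-filled configuration --- so it never needs your monotonicity-in-$m$ optimization or the parity discussion for odd $n-t$, and simply translates to the largest order at the very end.
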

\begin{proof}
Let $\F$ be a minimal DSF set of $k$ graphs, and denote the graphs of $\F$ both by $\{F_1, \dots, F_k\}$ and by $\{F'_1 , \dots, F'_k\}$, where the graphs are indexed in ascending order of the sizes of their vertex sets and edge sets, respectively. Let $n_1 = |V(F_1)|$. As a consequence of Lemma~\ref{lem: min sizes differ by at most 2}, we observe that $|V(F_k)| \leq n_1 + 2(k - 1)$. Corollary~\ref{cor: min edge sizes differ by, simpler} and Lemma~\ref{lem: min sizes differ by at most 2} imply that
\begin{align}
\nonumber |E(F'_k)| - |E(F'_1)| & \leq 2\sum_{i=1}^{k-1} |V(F'_i)|\\
\nonumber &\leq 2\sum_{i=2}^k |V(F_i)|\\
\nonumber &\leq 2 \sum_{i=2}^k (n_1+2(i-1))\\
&= 2(k-1)n_1 + 2k^2 - 2k. \label{eq: edge difference bound}
\end{align}
We now find a lower bound on the difference in the number of edges between any two graphs in the set $\F$. By Theorem~\ref{thm: forest of stars}, some element $F_s$ of $\F$ is a forest of stars, while some element $F_c$ of $\F$ is the complement of a forest of stars. The complement of a forest with $n$ vertices has at least $n(n-1)/2-(n-1) = (n-2)(n-1)/2$ edges, so we find
\begin{align*}
|E(F'_k)| - |E(F'_1)| &\geq |E(F_c)| - |E(F_s)|\\
&\geq \frac{(n_1-2)(n_1-1)}{2}-(|V(F_k)|-1)\\
&\geq \frac{(n_1-2)(n_1-1)}{2} - (n_1 + 2(k-1)-1).
\end{align*}
Combining this inequality and the one in~\eqref{eq: edge difference bound}, we have \[\frac{(n_1-2)(n_1-1)}{2} - (n_1 + 2(k-1) - 1) \leq 2(k-1)n_1 + 2k^2-2k,\] which reduces to
\begin{equation}\label{eq: bound on n1}
n_1 \leq 2k + \frac{1}{2} + \sqrt{8k^2 +2k - \frac{31}{4}}.
\end{equation}
Since $|V(F_k)| \leq n_1 + 2(k-1)$, the result follows.
\end{proof}

We will use the following bounds for minimal DSF triples when we search for them in Section~\ref{sec: the search}.

\begin{cor} \label{cor: bounds when k is 3}
Suppose $\F=\{F_1,F_2,F_3\}$, where the $F_i$ are indexed in order of the sizes of their vertex sets, from smallest to largest. If $\F$ is a minimal DSF triple, then $|V(F_1)| \leq 14$, $|V(F_2)| \leq 16$, and $|V(F_3)|\leq 18$.
\end{cor}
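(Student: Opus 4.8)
The plan is to read off the three bounds directly from the general machinery already set up for arbitrary $k$, specializing everything to $k=3$; the statement is essentially an immediate corollary of Theorem~\ref{thm: bound on min DSF size} and Lemma~\ref{lem: min sizes differ by at most 2}, so the work is arithmetic rather than conceptual. First I would invoke inequality~\eqref{eq: bound on n1} from the proof of Theorem~\ref{thm: bound on min DSF size}, which bounds $n_1 := |V(F_1)|$ (the smallest vertex count) by $2k + \tfrac{1}{2} + \sqrt{8k^2 + 2k - \tfrac{31}{4}}$. Setting $k=3$ gives $8k^2 + 2k - \tfrac{31}{4} = \tfrac{281}{4}$, so $n_1 \le \tfrac{13}{2} + \tfrac{1}{2}\sqrt{281}$.

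Next I would convert this real-valued bound into an integer bound. Since $17^2 = 289 > 281$, we have $\sqrt{281} < 17$, hence $\tfrac{13}{2} + \tfrac{1}{2}\sqrt{281} < \tfrac{13}{2} + \tfrac{17}{2} = 15$. Because $n_1$ is an integer, this forces $n_1 \le 14$, which is exactly the claim $|V(F_1)| \le 14$. I would then apply Lemma~\ref{lem: min sizes differ by at most 2}, which guarantees that in the vertex-size-ordered indexing consecutive orders differ by at most $2$. Thus $|V(F_2)| \le |V(F_1)| + 2 \le 16$ and $|V(F_3)| \le |V(F_2)| + 2 \le 18$, giving all three stated bounds.

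The only step needing any care, and the nearest thing to an obstacle, is the passage from the real bound $n_1 \le \tfrac{13}{2} + \tfrac{1}{2}\sqrt{281}$ to the integer bound $n_1 \le 14$: this rests entirely on the crude estimate $\sqrt{281} < 17$ keeping the expression strictly below $15$. Had the bound instead landed in the interval $[15,16)$, one would be forced to allow $n_1 = 15$, loosening every subsequent bound by one; so it is worth confirming that the slack is genuine (numerically $\tfrac{13}{2} + \tfrac{1}{2}\sqrt{281} \approx 14.88$). Beyond this rounding check, the corollary requires no new ideas and follows purely by substituting $k=3$ into the earlier results.
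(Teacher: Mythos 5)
Your proposal is correct and matches the paper's proof exactly: the paper likewise obtains $|V(F_1)|\le 14$ by substituting $k=3$ into inequality~\eqref{eq: bound on n1} and then derives the bounds on $|V(F_2)|$ and $|V(F_3)|$ from Lemma~\ref{lem: min sizes differ by at most 2}. Your explicit verification of the integer rounding step ($\tfrac{13}{2}+\tfrac{1}{2}\sqrt{281}\approx 14.88 < 15$) is a detail the paper leaves implicit, and it is done correctly.
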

\begin{proof}
Substituting $k=3$ in~\eqref{eq: bound on n1} yields $|V(F_1)| \leq 14$; the other statements follow from Lemma~\ref{lem: min sizes differ by at most 2}.
\end{proof}

The bound in Theorem~\ref{thm: bound on min DSF size} does not appear to be tight.  Consider the theorem in \cite{BarrusEtAl08} characterizing the DSF sets of order at most 2.
\begin{thm}[\cite{BarrusEtAl08}]\label{thm: pairs list}
A set $\F$ of at most two graphs is DSF if and only if it is one of the following:
\begin{itemize}
\item[\textup{(1)}] a set containing one of $K_1$, $K_2$, or $2K_1$;
\item[\textup{(2)}] one of $\{P_3,K_3\}$, $\{P_3,K_3 + K_1\}$, $\{P_3,K_3 + K_2\}$, $\{P_3, 2K_2\}$, $\{P_3,K_2 + K_1\}$, $\{K_2 + K_1, 3K_1\}$, $\{K_2 + K_1,K_{1,3}\}$, $\{K_2 + K_1,K_{2,3}\}$, $\{K_2 + K_1,C_4\}$, $\{K_3, 3K_1\}$, or $\{2K_2,C_4\}$.
\end{itemize}
\end{thm}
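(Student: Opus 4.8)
The plan is to prove both implications: that every set displayed in (1) and (2) is DSF, and conversely that any DSF set of at most two graphs occurs in the list.

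For the sufficiency direction, the sets in (1) are immediate. Forbidding $K_1$, $K_2$, or $2K_1$ confines the $\F$-free graphs to the null graphs, the edgeless graphs, or the complete graphs, respectively; in each case a degree sequence has a unique realization, so the property survives the addition of any further forbidden graph. For the eleven pairs in (2), the largest forbidden graph has at most five vertices, so by Lemma~\ref{lem: at most 2 more vtcs} any breaking pair would live on at most seven vertices and a finite check settles the matter. More illuminatingly, for each pair one can name the $\F$-free class and see directly that it is closed under passing to another realization of the same degree sequence --- for example $\{P_3,K_3\}$-free graphs are exactly the matchings together with isolated vertices, while $\{2K_2,C_4\}$-free graphs are the pseudo-split graphs, which are known to have a degree sequence characterization.

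For the necessity direction I would first handle singletons. By Theorem~\ref{thm: forest of stars}, if $\{F\}$ is DSF then $F$ is simultaneously a forest of stars and the complement of one. The complement being triangle-free forces $\alpha(F)\le 2$, and enumerating the star forests with independence number at most two whose complements are again star forests leaves only $K_1$, $K_2$, $2K_1$, $P_3$, and $K_2+K_1$. The first three fall under (1); the last two are eliminated by explicit breaking pairs, namely $(P_5,\,K_3+K_2)$ for $\{P_3\}$ and $(\overline{P_5},\,K_{2,3})$ for $\{K_2+K_1\}$. Hence $\{K_1\}$, $\{K_2\}$, and $\{2K_1\}$ are the only DSF singletons.

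The decisive step for pairs is that a DSF pair $\F=\{A,B\}$ not of type (1) must be minimal: the only proper subsets to rule out are $\{A\}$ and $\{B\}$, and a DSF singleton would force $A$ or $B$ into $\{K_1,K_2,2K_1\}$, contradicting that $\F$ avoids (1). Minimality then lets me invoke Theorem~\ref{thm: bound on min DSF size} with $k=2$, bounding both graphs to at most eleven vertices (and to far fewer once one exploits that $\F$ contains both a star forest and the complement of a star forest). It remains to run a finite, structured search: using Theorem~\ref{thm: forest of stars}, designate one member a small star forest and the other the complement of a small star forest, enumerate the compatible candidates, and for every combination either produce a breaking pair or confirm membership on the list. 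I expect this enumeration to be the main obstacle, because the bound of Theorem~\ref{thm: bound on min DSF size} is far from tight and a blind search through all graphs on up to eleven vertices is hopeless by hand; the real work lies in using the star-forest structure to keep the candidate list short and in exhibiting a breaking pair for each of the many rejected combinations.
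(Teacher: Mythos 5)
First, a point of reference: this paper does not prove Theorem~\ref{thm: pairs list} at all --- it is quoted from~\cite{BarrusEtAl08} --- so there is no in-paper proof to compare against; the closest analogue is the Section~\ref{sec: the search} methodology for triples (structural necessary conditions, the bound from Theorem~\ref{thm: bound on min DSF size}, then a bounded search for breaking pairs), and your plan mirrors exactly that. Much of what you do carry out is correct: the unigraph argument for the sets in (1), the classification of DSF singletons via Theorem~\ref{thm: forest of stars} (the complement of a star forest is indeed the only constraint needed to force $\alpha(F)\le 2$, and your five candidates are the right ones), the breaking pairs $(P_5, K_3+K_2)$ and $(\overline{P_5}, K_{2,3})$ both check out, and the observation that a DSF pair avoiding (1) must be minimal --- so that Theorem~\ref{thm: bound on min DSF size} with $k=2$ applies without circularity, since that theorem rests only on Lemmas~\ref{lem: min sizes differ by at most 2} and~\ref{lem: min edge sizes differ by} and Theorem~\ref{thm: forest of stars} --- is a clean reduction to a finite problem.

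However, two genuine gaps remain. The larger one you concede yourself: the necessity direction for pairs is reduced to a ``finite, structured search'' that is never executed, and that enumeration, together with exhibiting a breaking pair for each rejected candidate, \emph{is} the substance of the theorem; likewise the sufficiency of nine of the eleven pairs is left to an unperformed check (a cleaner uniform route would be Observation~\ref{obs: unigraph-producing} together with Theorem~\ref{thm: unigraphic DSF}, which disposes of every listed pair except $\{2K_2,C_4\}$, the one case where your pseudo-split argument is the right tool). The second gap is a flaw in the search design itself: you propose to ``designate one member a small star forest and the other the complement of a small star forest,'' but for graphs with fewer than four vertices the classes $\S$ and $\S^c$ intersect --- $P_3$ and $K_2+K_1$ lie in both --- so a single member can fill both roles while the other member lies in neither class. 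Pairs such as $\{P_3, K_3+K_2\}$ and $\{K_2+K_1, K_{2,3}\}$ (note $\overline{K_3+K_2}=K_{2,3}$ is not a star forest) are of exactly this shape, so your enumeration as described would miss four of the eleven pairs on the list. The fix is easy --- Table~\ref{tab: pairwise intersections} shows $\S\cap\S^c$ is empty only for graphs on at least four vertices, so the small graphs in $\S\cap\S^c$ must be treated as a separate branch of the case analysis, as this paper does for triples in Lemmas~\ref{lem: small graphs not in min DSF triples} and~\ref{lem: triples with K3} --- but as written the plan is both incomplete and structurally leaky.
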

When $k=1$ or $k=2$, Theorem~\ref{thm: bound on min DSF size} states that the largest graph in a minimal DSF singleton or pair has at most four or at most eleven vertices, respectively. Theorem~\ref{thm: pairs list} shows that the largest graph in any DSF singleton has two vertices, and the largest graph in any minimal DSF pair has five vertices. Furthermore, Theorem~\ref{thm: pairs list} and, as we shall see shortly, Theorem~\ref{thm: tripleslist} show that most known minimal DSF sets do not vary as widely in the orders of their elements as Lemma~\ref{lem: min sizes differ by at most 2} allows them to, suggesting that a better bound may be possible.

We remark that Lemmas~\ref{lem: min sizes differ by at most 2} and \ref{lem: min edge sizes differ by} and Theorem~\ref{thm: bound on min DSF size} highlight a major difference between minimal and non-minimal DSF sets. While the numbers, orders, and edge set sizes of graphs in a minimal DSF set satisfy fairly stringent relationships, Theorem~\ref{thm: pairs list}(1) and results in~\cite{NonMinimalTriples} show that this is not necessarily true for non-minimal DSF sets. For example, $\{2K_2,C_4,K_n\}$ is a non-minimal DSF triple for any $n\ge 1$.  If $\F$ and $\F^*$ are DSF and $\F$ properly contains $\F^*$, then the graphs of $\F \setminus \F^*$ can differ from the graphs in $\F^*$ in terms of the sizes of their vertex and edge sets by arbitrarily much.

Although Theorem~\ref{thm: bound on min DSF size} states that there are finitely many minimal DSF sets of a given size, there are infinitely many minimal DSF sets: the set $\G_n$ of all graphs on $n$ vertices is clearly DSF and must contain a finite minimal DSF subset. Thus Theorem~\ref{thm: bound on min DSF size} also implies that there are arbitrarily large finite DSF sets.

We next show that every finite graph belongs to a minimal DSF set.  We will use the following definition.

\begin{dfn}
Let $\G$ be a set of graphs in which no element is induced in another. Consider the set of all graphs that have the same degree sequence as a graph that induces an element of $\G$, and let $\D(\G)$ be the minimal elements of this set under induced subgraph containment.  When $\G$ contains a single graph $G$, we write $\D(G)$ for convenience.
\end{dfn}

Note that $\D(\G)$ contains $\G$.  As a simple example, we compute $\D(K_3)$.

\begin{exa}\label{exa: K_3-sieve}
We show that $\D(K_3) = \{K_3,P_5, \text{4-pan}, K_{2,3}, K_2+C_4, K_2+C_5, P_3+P_4, 3P_3\}$.  For each of the graphs listed one may easily construct a $K_3$-inducing graph having the same degree sequence; this property does not hold for any of the proper induced subgraphs of a graph in the list. Furthermore, no graph in the list is induced in any other graph in the list. Hence $\{K_3,P_5, \text{4-pan}, K_{2,3}, K_2+C_4, K_2+C_5, P_3+P_4, 3P_3\} \subseteq \D(K_3)$.

Suppose that $G$ contains no element of the list as an induced subgraph. We show that the degree sequence of $G$ has no $K_3$-inducing realizations; in other words, the list above contains $\D(K_3)$. We begin by supposing that $G$ induces a cycle. Such a cycle must have length 4 or 5, since $G$ is $\{K_3, P_5\}$-free. Since $G$ is $\{K_2+C_4,K_2+C_5\}$-free, the component of $G$ containing the cycle is the only nontrivial component. If the cycle is a 4-cycle, then the component contains no other vertices, since otherwise some vertex on the cycle would have to be adjacent to a cycle vertex, and $G$ is $\{K_3,\text{4-pan},K_{2,3}\}$-free.  Likewise, if the cycle is a 5-cycle, then the component has no other vertices, since $G$ is $\{K_3,P_5,\text{4-pan}\}$-free. Thus if $G$ induces a cycle, it has the form $C_k + nK_1$, where $k \in \{4,5\}$. In this case $G$ is the unique realization of its degree sequence; hence $d(G)$ has no $K_3$-inducing realizations.

If $G$ induces no cycle, then $G$ is a forest. Since $G$ is $P_5$-free, each component has diameter at most 3. Suppose that $G$ has component with diameter 3. Since $G$ is $P_3+P_4$-free, every other component of $G$ is $P_3$-free and hence complete; since $G$ is a forest we conclude that $G$ is the disjoint union of a diameter-3 tree (often called a ``double star'') with some number of components with orders 1 or 2.

Suppose instead that $G$ only has components with diameters at most 2; in this case, $G$ is a disjoint union of stars. Since $G$ is $3P_3$-free, $G$ has at most two components with order greater than 2. We conclude that $G$ has the form $K_{1,a} + K_{1,b} + mK_2 + nK_1$ or is a proper induced subgraph of such a graph.

Note now that any 2-switch on the union of a double star with components of orders 1 or 2 either preserves the isomorphism class or changes it to a graph having at most two $P_3$-inducing star components and all other components having order 1 or 2. Any 2-switch on a graph of this latter type either preserves the isomorphism class or changes it to a double star plus components or orders 1 or 2. Thus in every case the degree sequence of $G$ has no $K_3$-inducing realization.
\end{exa}

The set $\D(\G)$ is closely tied to DSF sets containing $\G$. If $\F$ is a DSF set containing $\G$ as a subset, then every element of $\D(\G)$ must induce at least one element of $\F$. Furthermore, $\D(\G)$ is minimal among DSF sets containing $\G$:

\begin{lem} \label{lem: S is DSF and S_0-minimal}
The set $\D(\G)$ is DSF, and no proper subset of $\D(\G)$ that contains $\G$ is DSF.
\end{lem}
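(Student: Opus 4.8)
The plan is to reduce the DSF property of $\D(\G)$ to a clean degree-sequence condition. Let $S$ denote the collection of all graphs $H$ such that the degree sequence $d(H)$ has some realization inducing an element of $\G$; thus $\D(\G)$ is by definition the set of minimal elements of $S$ under induced subgraph containment. The crucial structural feature of $S$ is that membership depends only on the degree sequence: if $H\in S$ and $d(H')=d(H)$, then $H'\in S$, so $S$ is a union of degree-sequence classes. Writing $\F=\D(\G)$, I would prove the equivalence that $H$ fails to be $\F$-free if and only if $H\in S$. Once this is established, the $\F$-free graphs are exactly the graphs not in $S$, which is a degree-sequence property, and hence $\F$ is DSF.

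For the equivalence, one direction is immediate: if $H\in S$, then among the finitely many induced subgraphs of $H$ lying in $S$ there is a minimal one $M$, and by transitivity of induced containment $M$ is minimal in all of $S$, so $M\in\F$ and $H$ is not $\F$-free. The reverse direction is the main obstacle and is where the $2$-switch machinery enters. Suppose $H$ induces some $M\in\F$, and fix a copy of $M$ on a vertex set $V\subseteq V(H)$. Since $M\in S$, its degree sequence has a realization $M'$ inducing an element $G\in\G$, and by the theorem of Fulkerson, Hoffman, and McAndrew there is a sequence of $2$-switches carrying $M$ to $M'$. I would perform exactly these $2$-switches inside $H$, always acting on the four vertices (all lying in $V$) prescribed at each stage; because the induced subgraph on $V$ agrees with the current intermediate graph at every step, each switch is legal in $H$ and alters only edges within $V$. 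The result is a graph $H^{*}$ with $d(H^{*})=d(H)$ and $H^{*}[V]=M'$, so $H^{*}$ induces $G\in\G$, whence $H\in S$. The only delicate point is precisely this transfer: a $2$-switch sequence realizing the degree sequence of an induced copy of $M$ can be carried out inside the ambient graph $H$ without disturbing the rest of $H$, since every switch acts only on vertices of the copy and therefore preserves $d(H)$.

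For the second statement, suppose $\F'$ is a proper subset of $\D(\G)$ containing $\G$, and choose $M\in\D(\G)\setminus\F'$. As above, $d(M)$ has a realization $M'$ inducing some $G\in\G\subseteq\F'$, so $M'$ is not $\F'$-free. I claim $M$ itself is $\F'$-free: any element of $\F'\subseteq\D(\G)\subseteq S$ induced in $M$ would, by minimality of $M$ in $S$, have to equal $M$, which is impossible since $M\notin\F'$. Thus $M$ and $M'$ share a degree sequence while $M$ is $\F'$-free and $M'$ is not, so $(M',M)$ is an $\F'$-breaking pair and $\F'$ is not DSF, as desired.
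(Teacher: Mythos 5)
Your proof is correct and takes essentially the same route as the paper: the $2$-switch transfer of an induced copy of $M$ to $M'$ inside the ambient graph is exactly the paper's central step, and your equivalence ``$H$ is not $\D(\G)$-free iff $H\in S$'' is just an explicit repackaging of the paper's direct verification that no $\D(\G)$-breaking pair exists (the paper, too, implicitly uses your Direction~1 when it asserts that any graph in $S$ induces a minimal element). For the second statement your pair $(M',M)$ is precisely the paper's breaking pair $(F',F)$, justified by the same minimality argument.
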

\begin{proof}
Let $H$ and $H'$ be two graphs with the same degree sequence, and suppose that $H$ induces an element $F$ of $\D(\G)$. By definition, $F$ has the same degree sequence as a graph $F'$ that induces an element of $\G$. There exists a sequence of 2-switches that changes $F$ into $F'$; applying these 2-switches to the induced copy of $F$ in $H$ changes $H$ into a graph $H''$. Since $H'$ has the same degree sequence as $H''$ and $H''$ induces $F'$, by definition $H'$ induces an element of $\D(\G)$. Hence no $\D(\G)$-breaking pair exists; thus $\D(\G)$ is DSF.

Suppose now that $\D'$ is a proper subset of $\D(\G)$ that contains $\G$ and is DSF. Let $F$ be a graph in $\D(\G)\setminus\D'$. Note that $F$ is $\D'$-free, by the assumption of minimality under induced subgraph containment in the definition of $\D(\G)$. However, by definition $F$ has the same degree sequence as a graph $F'$ that induces an element of $\G$. Since $\D'$ contains $\G$, we see that $(F', F)$ is a $\D'$-breaking pair, a contradiction.
\end{proof}

\begin{thm} \label{thm: every graph in minimal}
Every graph belongs to a minimal DSF set.
\end{thm}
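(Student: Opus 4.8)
The plan is to use the machinery already developed, specifically the operator $\D(\cdot)$ and Lemma~\ref{lem: S is DSF and S_0-minimal}. Given an arbitrary graph $G$, I want to build a minimal DSF set that contains $G$ as one of its elements. The natural starting point is the set $\D(G)$: by Lemma~\ref{lem: S is DSF and S_0-minimal} (with $\G = \{G\}$), $\D(G)$ is DSF, it contains $G$, and no proper subset of $\D(G)$ that still contains $G$ is DSF. So $\D(G)$ is already DSF and is minimal \emph{among DSF subsets that retain $G$}. The remaining issue is that minimality in the sense required by the theorem means no proper subset whatsoever is DSF, not merely no proper subset containing $G$.

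The key step is to pass from $\D(G)$ to an honestly minimal DSF set while guaranteeing that $G$ survives. First I would observe that $\D(G)$, being finite and DSF, contains at least one minimal DSF subset $\F$ (just keep deleting elements as long as the result stays DSF; this terminates). The danger is that the deletion process might throw away $G$ itself. To rule this out, I would argue that $G$ must lie in \emph{every} minimal DSF subset of $\D(G)$. Suppose for contradiction that $\F \subseteq \D(G)$ is DSF but $G \notin \F$. Then $\F$ is a DSF set not containing $G$, yet $\F \subseteq \D(G)$. The element $G$ of $\D(G)$, by the minimality-under-induced-containment built into the definition of $\D(G)$, induces no other element of $\D(G)$; in particular $G$ is $\F$-free. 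But $G$ itself induces an element of $\G = \{G\}$ (namely $G$), and $G$ trivially has the same degree sequence as the $\G$-inducing graph $G$, so there is a realization of $d(G)$ inducing $G$; meanwhile $G$ is $\F$-free, giving an $\F$-breaking pair and contradicting that $\F$ is DSF. Hence $G$ belongs to every DSF subset of $\D(G)$, and in particular to any minimal DSF subset.

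Combining these observations finishes the proof: take any minimal DSF subset $\F$ of $\D(G)$ (one exists because $\D(G)$ is a finite DSF set, by Lemma~\ref{lem: S is DSF and S_0-minimal}), and by the preceding paragraph $G \in \F$. Then $\F$ is a minimal DSF set containing $G$, as desired.

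I expect the main obstacle to be the contradiction argument showing $G$ cannot be deleted, i.e.\ that $G$ lies in every DSF subset of $\D(G)$. The subtlety is producing a clean $\F$-breaking pair: I need a graph that induces $G$ and has the same degree sequence as some $\F$-free graph. The cleanest route is to note that $G$ itself is $\F$-free whenever $G \notin \F$ (since no element of $\D(G)$ is induced in another, so no element of $\F \subseteq \D(G) \setminus \{G\}$ is induced in $G$), while $G$ is also, trivially, a realization of its own degree sequence that induces $G$; thus the pair $(G, G)$ — or, to be careful about requiring $G$ to induce something in $\F$, one compares $G$ against itself after noting $G$ induces the forbidden graph $G \in \D(G)$ but $G$ is $\F$-free — directly violates the DSF property of $\F$. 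Handling the degenerate ``$H = H'$'' case carefully and making sure the breaking-pair definition is met is the one place where the argument must be written with precision.
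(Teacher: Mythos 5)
There is a genuine gap, and it sits exactly where you predicted: in ruling out DSF subsets $\F \subseteq \D(G)$ with $G \notin \F$. Your proposed contradiction via the pair $(G,G)$ does not work. An $\F$-breaking pair $(H,H')$ requires $H$ to induce an element \emph{of $\F$} while $H'$ is $\F$-free; since $G \notin \F$ and, as you correctly observe, $G$ is then $\F$-free by the antichain property of $\D(G)$, the graph $G$ induces no element of $\F$ at all, so $(G,G)$ breaks nothing. The fact that $G$ induces a member of $\D(G)\setminus\F$ is irrelevant: the DSF property of $\F$ constrains only $\F$-freeness. (Indeed, no pair of the form $(H,H)$ can ever be a breaking pair, since $H$ would have to simultaneously induce an element of $\F$ and be $\F$-free.) The claim you want is true, but it requires real work, which is essentially the paper's proof: first, $\F$ cannot contain any realization $G^*$ of $d(G)$ with $G^* \neq G$, since then $(G^*,G)$ \emph{is} a genuine $\F$-breaking pair ($G^*$ induces itself, and $G$ is $\F$-free); second, take $H \in \F$ of minimum order, so that $H$ has the same degree sequence as some $H'$ inducing $G$. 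If $H' \cong G$, then $H$ is a realization of $d(G)$ lying in $\F$, which was just excluded; otherwise $H'$ properly contains $G \in \D(G)$ as an induced subgraph, so $H' \notin \D(G) \supseteq \F$, and every proper induced subgraph of $H'$ has fewer than $|V(H')| = |V(H)|$ vertices and so is not in $\F$ by the minimum-order choice. Hence $H'$ is $\F$-free and $(H,H')$ is an $\F$-breaking pair, the desired contradiction. The paper packages this by applying the operator to $\G = \{$all realizations of $d(G)\}$ rather than to $\{G\}$ (these give the same set, since a graph inducing one realization of $d(G)$ can be $2$-switched within that copy into a graph of the same degree sequence inducing any other realization), and shows $\D(\G)$ is itself minimal.

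A second, independent gap is your appeal to finiteness: you extract a minimal DSF subset of $\D(G)$ by "deleting elements as long as the result stays DSF," which terminates only if $\D(G)$ is finite. Nothing in the paper establishes this; on the contrary, whether $\D(G)$ is always finite is explicitly posed as an open question in Section~\ref{sec: conclusion}, and for an infinite DSF set the existence of any minimal DSF subset is not automatic (DSF-ness is not obviously preserved in the limit of a decreasing chain, so Zorn-type arguments do not directly apply). Fortunately this detour is unnecessary: once you know that no proper subset of $\D(G)$ containing $G$ is DSF (Lemma~\ref{lem: S is DSF and S_0-minimal}) \emph{and} that no DSF subset of $\D(G)$ omits $G$ (the corrected argument above), you conclude that no proper subset of $\D(G)$ is DSF at all, so $\D(G)$ itself is a minimal DSF set containing $G$ --- no deletion process needed, which is precisely the paper's route.
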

\begin{proof}
Let $G$ be an arbitrary graph, and let $\G$ be the set of all graphs having the same degree sequence as $G$. By Lemma~\ref{lem: S is DSF and S_0-minimal}, $\D(\G)$ is a DSF set, and any proper subset that is DSF must omit an element of $\G$. Suppose that $\D'$ is a proper subset of $\D(\G)$ that is DSF and omits a graph $F$ in $\G$. In order to prevent a $\D'$-breaking pair, $\D'$ cannot contain any element of $\G$. However, consider an element $H$ of $\D'$ with minimum order. Because $H$ belongs to $\D(\G)$, it has the same degree sequence as a graph $H'$ that induces an element of $\G$. As such, $H'$ does not belong to $\D(\G)$ (and hence $H' \notin \D'$), since by definition no element of $\D(\G)$ contains another element of $\D(\G)$ as an induced subgraph. Nor does any proper induced subgraph of $H'$ belongs to $\D'$, since $H$ was assumed to have minimum order in $\D'$. Thus $(H,H')$ is a $\D'$-breaking pair, a contradiction. Hence no proper subset of $\D(\G)$ omitting an element of $\G$ can be DSF, and $\D(\G)$ is therefore a minimal DSF set.
\end{proof}

We conclude this section by using the sets $\D(\G)$ to give a new characterization of DSF sets.

\begin{thm} \label{thm: DSF iff D condition holds}
Let $\F$ be a set of graphs in which no element is induced in another. The following are equivalent:
\begin{enumerate}
 \item[\textup{1.}] $\F$ is DSF.
 \item[\textup{2.}] For each element $G$ of $\F$, every element of $\D(G)$ contains an element of $\F$ as an induced subgraph.
\item[\textup{3.}] $\D(\F)=\F$.
 \end{enumerate}
\end{thm}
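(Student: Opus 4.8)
The plan is to establish the cyclic chain $\textup{1}\Rightarrow\textup{2}\Rightarrow\textup{3}\Rightarrow\textup{1}$, which yields the full equivalence. Throughout, for a graph $G$ or a set $\G$ of graphs I would write $S_G$ (respectively $S_\G$) for the set of all graphs having the same degree sequence as some graph that induces $G$ (respectively some element of $\G$), so that $\D(G)$ and $\D(\G)$ are precisely the minimal elements of $S_G$ and $S_\G$ under induced subgraph containment. The hypothesis that $\F$ is an antichain guarantees that $\D(\F)$ is well defined, and by the note following the definition we have $\F\subseteq\D(\F)$; moreover $\D(\F)$ is itself an antichain, since by construction no one of its elements induces another.

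For $\textup{1}\Rightarrow\textup{2}$ I would argue straight from the definition of DSF. Fix $G\in\F$ and $F\in\D(G)$. By definition of $\D(G)$, the graph $F$ shares a degree sequence with some graph $F'$ that induces $G$; since $G\in\F$, the graph $F'$ is not $\F$-free. If $\F$ is DSF, then $F$ cannot be $\F$-free either, so $F$ induces an element of $\F$, which is exactly statement 2. For $\textup{3}\Rightarrow\textup{1}$ I would simply invoke Lemma~\ref{lem: S is DSF and S_0-minimal} with $\G=\F$: the set $\D(\F)$ is always DSF, so if $\D(\F)=\F$ then $\F$ is DSF.

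The substantive step is $\textup{2}\Rightarrow\textup{3}$, where I must prove $\D(\F)\subseteq\F$ (the reverse containment being free). The key preliminary is a poset observation: since $G\in\F$ forces $S_G\subseteq S_\F$, any $F\in\D(\F)$ that happens to lie in $S_G$ is automatically minimal in $S_G$ as well, hence $F\in\D(G)$; indeed, were some proper induced subgraph of $F$ to lie in $S_G$, it would lie in $S_\F$ and contradict the minimality of $F$ in $S_\F$. Now given $F\in\D(\F)$, it lies in $S_\F=\bigcup_{G\in\F}S_G$, so $F\in\D(G)$ for some $G\in\F$; statement 2 then guarantees that $F$ induces some element $G'$ of $\F$. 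Finally, because $G'\in\F\subseteq\D(\F)$ and $\D(\F)$ is an antichain, $F$ cannot properly induce $G'$, forcing $F=G'\in\F$ and completing the inclusion.

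I expect $\textup{2}\Rightarrow\textup{3}$ to be the main obstacle, precisely because it requires keeping straight the three closely related objects $\F$, $\D(\F)$, and the individual sets $\D(G)$, and because it draws on two structural facts at once: the antichain property of $\D(\F)$ and the interaction between passing to minimal elements and forming the union $\bigcup_{G\in\F}S_G$. Once the poset observation linking minimality in $S_\F$ to minimality in each $S_G$ is isolated, the remaining implications are short and the cycle closes.
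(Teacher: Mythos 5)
Your proposal is correct and takes essentially the same route as the paper: the same cyclic chain $\textup{1}\Rightarrow\textup{2}\Rightarrow\textup{3}\Rightarrow\textup{1}$, with your ``poset observation'' in $\textup{2}\Rightarrow\textup{3}$ being precisely the paper's compressed remark that since no proper induced subgraph of $H$ shares a degree sequence with an $\F$-inducing graph, $H\in\D(G)$, and with the same reliance on the containment $\F\subseteq\D(\F)$ noted after the definition. The only (harmless) difference is that for $\textup{3}\Rightarrow\textup{1}$ you invoke Lemma~\ref{lem: S is DSF and S_0-minimal} with $\G=\F$, whereas the paper re-derives the same fact directly by showing that the second graph of any would-be $\F$-breaking pair must induce an element of $\D(\F)=\F$.
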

\begin{proof}
$1.\Rightarrow 2.$ Let $G$ be an element of $\F$. By definition, every element of $\D(G)$ other than $G$ itself belongs to a $G$-breaking pair, so since $\F$ is DSF, every element of $\D(G)$ contains an element of $\F$ as an induced subgraph.

$2.\Rightarrow 3.$ As $\F\subseteq \D(\F)$, we need only show that $\D(\F)\subseteq \F$.  Suppose $H\in\D(\F)$.  By definition, $H$ has the same degree sequence as some graph $H'$ that induces a graph $G$ of $\F$. No proper induced subgraph of $H$ has this property, so by definition, $H\in \D(G)$. By assumption, $H$ contains an element of $\F$ as an induced subgraph. Since $H$ is in $\D(\F)$, no proper induced subgraph of $H$ is an element of $\F$, so in fact $H$ is an element of $\F$.

$3.\Rightarrow 1.$  Let $(H,H')$ be an $\F$-breaking pair. By definition of $\D(\F)$, $H'$ must induce an element of $\D(\F)$, but since $\D(\F)=\F$, $H'$ induces an element of $\F$ and hence $(H,H')$ is not an $\F$-breaking pair.  Since no $\F$-breaking pair exists, $\F$ is DSF.
\end{proof}


\section{Minimal DSF triples} \label{sec: the search}

We now use properties of minimal DSF sets to determine all minimal DSF triples.  By Corollary~\ref{cor: bounds when k is 3}, this is a finite problem: only triples of graphs with at most $18$ vertices need be considered.  However, examining all such triples is infeasible, since there are approximately $1.79\times10^{30}$ graphs on $18$ vertices (see \cite{oeis}).  In this section we present a practical algorithm that identifies these sets by ruling out all non-suitable triples. We present the complete list of minimal DSF triples in Theorem~\ref{thm: tripleslist}.

Suppose that $\F=\{F_1,F_2,F_3\}$ is a minimal DSF triple.  By Theorem~\ref{thm: forest of stars}, every DSF set must contain a graph that is a forest of stars and a graph (possibly the same graph) that is a complement of a forest of stars.  Additionally, we show here that every DSF set must also contain a complete bipartite graph and a disjoint union of cliques where at most one clique has more than two vertices, as well as complements from these two families.

We construct potential minimal DSF triples $\F$ by ensuring that each of the six graph classes above is satisfied by one of the graphs in $\F$.  Very small graphs can be in several of the graph classes at once, so we first handle the case where one graph of $\F$ has at most three vertices.  For the remaining case, we may assume that each graph of $\F$ has at least four vertices.  As there are six classes that must be represented in each DSF triple, some graph in the triple must be a member of more than one class, giving additional restrictions.  Potential triples that violate the vertex and edge bounds of Lemma~\ref{lem: min sizes differ by at most 2} or Corollary~\ref{cor: bounds when k is 3} are then eliminated, and the remaining triples are then examined with a brute force search for $\F$-breaking pairs.  Triples for which no $\F$-breaking pairs are found are then DSF sets.

\subsection{Graph classes appearing in every DSF set}

In~\cite{BarrusEtAl08} the authors and M. Kumbhat showed that every DSF set must contain a representative from certain graph classes.  As we recalled in Theorem~\ref{thm: forest of stars}, one of these graph classes is the class of forests of stars.  The following lemma then implies that every DSF set must also then contain the complement of a forest of stars.

\begin{lem}[\cite{BarrusEtAl08}]\label{lem: complements}
Given any collection $\F$ of graphs, let $\F^c$ denote the collection of graphs which are complements of those in $\F$.  Then $\F$ is DSF if and only if $\F^c$ is DSF.
\end{lem}

We here extend the list of graph classes from which every DSF set must have a representative.
In~\cite{BarrusEtAl08} we see that every DSF set contains a disjoint union of cliques; this is strengthened in the following theorem.

\begin{thm}\label{thm: DSF contains disjoint unions}
Every DSF set contains a complete bipartite graph and a disjoint union of cliques with at most one clique with more than two vertices.  Moreover, every DSF set must also contain a disjoint union of at most two complete graphs and a complete multipartite graph where at most one partite set has order greater than two.
\end{thm}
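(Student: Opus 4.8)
The plan is to prove the two assertions of the first sentence—that every DSF set contains a complete bipartite graph (call this class $\mathcal{A}$) and a disjoint union of cliques having at most one clique of order exceeding two (call this class $\mathcal{B}$)—and to deduce the ``moreover'' clause by complementation. Since $\overline{K_{m,n}}=K_m+K_n$ is a disjoint union of at most two cliques, and the complement of a member of $\mathcal{B}$ is a complete multipartite graph in which at most one partite set has order exceeding two, Lemma~\ref{lem: complements} lets me apply the first two assertions to $\F^c$ and take complements to obtain the last two. Throughout I regard edgeless graphs as degenerate complete bipartite graphs $K_{0,n}$, so that each of these four classes is closed under induced subgraphs; this convention is forced anyway, since e.g. the DSF set $\{2K_1\}$ must then be seen to contain the complete bipartite graph $2K_1=K_{0,2}$.

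The engine of the argument is a single reduction. Let $\mathcal{X}$ be any class closed under induced subgraphs, and suppose that \emph{every} graph occurs as an induced subgraph of some graph whose degree sequence is also realized by a member of $\mathcal{X}$. Then every DSF set $\F$ contains a member of $\mathcal{X}$: fix any $F_0\in\F$ (one exists by Theorem~\ref{thm: forest of stars}), embed $F_0$ as an induced subgraph of a graph $H$ with $d(H)=d(B)$ for some $B\in\mathcal{X}$, and observe that $H$ is not $\F$-free. Since $\F$ is DSF and $H$ is a non-$\F$-free realization of $d(B)$, no realization of $d(B)$ is $\F$-free; in particular $B$ induces some $F\in\F$, and as $F$ is an induced subgraph of $B\in\mathcal{X}$ we conclude $F\in\mathcal{X}$. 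Thus it suffices to verify the embedding hypothesis for $\mathcal{A}$ and for $\mathcal{B}$.

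For $\mathcal{B}$ the embedding is explicit. Given an arbitrary graph $G$, pick an even integer $a$ with $a-1\ge\Delta(G)$ and $a$ at least the number $h$ of vertices of $G$ of degree at least $2$. Build $H$ from $G$ by attaching $a-1-\deg_G(v)$ new pendant vertices to each $v$ with $\deg_G(v)\ge 2$ (raising each such $v$ to degree $a-1$), and by adding $a-h$ disjoint new stars $K_{1,a-1}$; all vertices of $G$ of degree at most $1$ and all pendants receive no further edges. Since every added edge meets a new vertex, $G$ is induced in $H$; moreover $H$ (itself a star forest together with the isolated vertices of $G$) has exactly $a$ vertices of degree $a-1$, some number of isolated vertices, and an even number of vertices of degree $1$ (parity holds because $a$ is even, so the two odd-degree counts $a$ and the number of degree-$1$ vertices sum to an even total). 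This is precisely the degree sequence of $K_a+bK_2+cK_1\in\mathcal{B}$, as required.

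For $\mathcal{A}$ the natural target is the regular complete bipartite graph $K_{N,N}$, whose degree sequence $(N^{2N})$ is shared by every $N$-regular graph on $2N$ vertices; so I must show that every graph $G$ is an induced subgraph of some $N$-regular graph on exactly $2N$ vertices. For large $N$ one joins each $v\in V(G)$ to $N-\deg_G(v)$ of the $2N-|V(G)|$ new vertices—adding no edge inside $V(G)$, so $G$ stays induced—and then adds edges among the new vertices to bring each up to degree $N$. The last step is the one nontrivial point, and I expect it to be the main obstacle: it amounts to showing that the residual degree requirements on the new vertices form a graphical sequence. This holds for all sufficiently large $N$, since those requirements are all close to $N$ while there are roughly $2N$ new vertices, so the Erdős–Gallai conditions are satisfied with ample slack once a parity adjustment is made. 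Granting this, $K_{N,N}$ is not $\F$-free, and the reduction yields a complete bipartite member of $\F$, completing $\mathcal{A}$ and hence (via the complementation remark above) the entire theorem.
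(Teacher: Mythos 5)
Your proposal is correct, and its logical engine coincides with the paper's: embed a member of $\F$ as an induced subgraph of a graph $H$ whose degree sequence is also realized by a graph $B$ in the target hereditary class, use the DSF property to conclude that $B$ induces a member of $\F$ (which then lies in the class by heredity), and transfer to the complementary classes via Lemma~\ref{lem: complements}. The difference is which two classes receive the direct construction, and what those constructions are. The paper works directly with the pair $\B^c$ and $\K$: starting from $F\in\F$ with $m=|E(\overline{F})|$, it adjoins the cocktail-party graph $\overline{mK_2}$ (respectively, the edgeless graph $2mK_1$) and joins each partite pair $\{a,b\}$ to a distinct nonadjacent pair $\{c,d\}$ of $F$ via the edges $ac,bd$; every $F$-vertex then has degree $|V(F)|-1$, so $d(H)$ equals $d(K_{|V(F)|}+K_{|V(M)|})$ (respectively, $d(K_{|V(F)|}+mK_2)$), and complementation yields the bipartite and multipartite classes. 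You instead construct directly for $\B$ and $\K$ and complement for the other two. Your $\K$-construction (pendants plus extra stars, targeting $K_a+bK_2+cK_1$) is a valid alternative to the paper's, with two small repairs: you should additionally require $a\ge 4$, since if $a-1\le 1$ the degree-$(a-1)$ vertices are indistinguishable from the degree-$1$ vertices and the count ``exactly $a$ vertices of degree $a-1$'' fails (try $G=K_2$, $a=2$); and the parenthetical claim that $H$ is a star forest plus isolated vertices is false ($H$ contains the arbitrary graph $G$ induced), though nothing in your argument uses it. The genuine divergence is the bipartite step: by insisting on the balanced target $K_{N,N}$ you must show every graph embeds induced in an $N$-regular graph on $2N$ vertices, which requires an existence argument for completing the new vertices to their residual degrees. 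Your sketch is sound---parity is automatic, since the residual degree sum is $(2N-n)N-(nN-2|E(G)|)=2N^2-2nN+2|E(G)|$, and distributing the cross-edges evenly leaves a near-regular sequence with degrees close to $N$ on roughly $2N$ vertices, well within Erd\H{o}s--Gallai---but this step is only sketched and would need to be written out. The paper's choice to attack $\B^c$ directly buys precisely this economy: its gadget is fully explicit, requires no balancedness (the target is $K_{|V(F)|}+K_{2m}$ rather than $K_{N,N}$), and avoids graphicality arguments entirely; your route buys a cleanly isolated reduction lemma, stated once for an arbitrary hereditary class, which is a nice conceptual packaging of what the paper instantiates twice in-line.
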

\begin{proof}
Let $\F$ be a DSF set, and let $F$ be any graph of $\F$. Let $m$ be the number of edges in the complement $\overline{F}$. We form a graph $H$ by starting with the disjoint union of $F$ and the complete multipartite graph $M$ having $m$ partite sets of size 2. We bijectively assign each partite set $\{a,b\}$ in $M$ to a pair $\{c,d\}$ of nonadjacent vertices in $F$ and add the edges $ac$ and $bd$ to $H$. In the resulting graph, each vertex belonging to $F$ now has degree $|V(F)|-1$, and every vertex of $M$ has degree $|V(M)|-1$. The degree sequence of $H$ thus matches that of $K_{|V(F)|} + K_{|V(M)|}$, which we denote by $H'$. Since $\F$ is DSF, $(H,H')$ cannot be an $\F$-breaking pair, so $\F$ contains a graph induced by $H'$. Thus $\F$ contains a disjoint union of at most two complete graphs.

Applying a similar construction again to $F$, but with $M$ an edgeless graph with $2m$ vertices instead, we construct an $F$-inducing graph $H$ having the same degree sequence as $H'=K_{|V(F)|}+mK_2$; hence $\F$ must also contain a disjoint union of complete graphs where at most one component has order greater than two.

Finally, by Lemma~\ref{lem: complements} the set $\F^c$ is also DSF and hence must contain disjoint unions of complete graphs of both of the types described. The set $\F$ contains the complements of these graphs and hence a complete bipartite graph and a complete multipartite graph where at most one partite set has order greater than two.
\end{proof}

Note that such results were proved in \cite{BarrusEtAl08} in the framework of monotone parameters.  This framework also works for the previous result, where edit distance (the number of edges that must be added or removed to change the graph into, for instance, a complete bipartite graph) is the monotone parameter.  We have presented a direct proof here for simplicity.

\subsection{Minimal DSF triples containing a small graph}

We first handle the cases where a minimal DSF triple contains a graph with at most three vertices.

\begin{lem}\label{lem: small graphs not in min DSF triples}
\begin{enumerate}
\item[\textup{(a)}] No minimal DSF triple contains a graph having order 1 or 2.
\item[\textup{(b)}] No minimal DSF triple contains $P_3$ or $K_1+K_2$.
\end{enumerate}
\end{lem}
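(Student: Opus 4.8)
The plan is to reduce both parts to the classification of DSF singletons and pairs in Theorem~\ref{thm: pairs list}, using the fact that a triple cannot be minimal once it is shown to contain a proper DSF subset. For part (a), I would observe that a graph of order $1$ or $2$ is one of $K_1$, $K_2$, or $2K_1$, and that by Theorem~\ref{thm: pairs list}(1) each of these alone forms a DSF singleton. Hence if a DSF triple $\F$ contained such a graph $G$, then $\{G\}$ would be a proper DSF subset of $\F$, so $\F$ could not be minimal.

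For part (b), I would first note that $P_3$ and $K_1+K_2$ are complementary, since $\overline{P_3}=K_2+K_1$. By Lemma~\ref{lem: complements} a set is DSF if and only if its complement is, and since complementation is a bijection carrying proper subsets to proper subsets, it also preserves minimality. It therefore suffices to prove that no minimal DSF triple contains $P_3$; the statement for $K_1+K_2$ follows by passing to complements. To rule out $P_3$, I would exhibit a single pair of realizations of the degree sequence $(2,2,2,1,1)$, namely $P_5$ and $K_3+K_2$. The graph $P_5$ induces $P_3$, so if $P_3\in\F$ then $P_5$ is not $\F$-free; since $\F$ is DSF, the pair $(P_5,K_3+K_2)$ cannot be $\F$-breaking, forcing $K_3+K_2$ to induce some element $Y$ of $\F$. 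Because $K_3+K_2$ is a disjoint union of cliques it is $P_3$-free, so $Y\neq P_3$ and $Y$ is an induced subgraph of $K_3+K_2$.

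The crux, which I expect to be the only genuine work, is a finite case check. The induced subgraphs of $K_3+K_2$ are exactly $K_1$, $2K_1$, $K_2$, $K_2+K_1$, $2K_2$, $K_3$, $K_3+K_1$, and $K_3+K_2$; note in particular that $K_3+K_2$ has no independent set of size three, so $3K_1$ cannot occur. For each candidate $Y$, Theorem~\ref{thm: pairs list} supplies a DSF subset of $\F$: when $Y\in\{K_1,2K_1,K_2\}$ the singleton $\{Y\}$ is DSF by part (1), and otherwise $\{P_3,Y\}$ is one of the DSF pairs listed in part (2). In every case $\F$ contains a proper DSF subset and so is not minimal, which completes the argument.

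The main thing to verify carefully is that the enumeration of induced subgraphs of $K_3+K_2$ is complete and that each of the associated sets appears in Theorem~\ref{thm: pairs list}; both are routine but essential, and the cleverness lies entirely in choosing the single breaking-pair candidate $(P_5,K_3+K_2)$ so that all the relevant induced subgraphs happen to form known DSF singletons or pairs with $P_3$. I would not need the machinery of $\D(\cdot)$ for this argument, though one could alternatively phrase the step ``$K_3+K_2$ must induce an element of $\F$'' through the observation that $K_3+K_2\in\D(P_3)$ together with Theorem~\ref{thm: DSF iff D condition holds}.
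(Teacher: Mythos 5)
Your proposal is correct and follows essentially the same route as the paper's proof: part (a) via the DSF singletons in Theorem~\ref{thm: pairs list}(1), part (b) via the same breaking pair $(P_5, K_3+K_2)$ followed by a case check that every induced subgraph of $K_3+K_2$ forms a DSF singleton or a DSF pair with $P_3$ listed in Theorem~\ref{thm: pairs list}, and the $K_1+K_2$ case by complementation using Lemma~\ref{lem: complements}. Your version merely makes explicit the enumeration of induced subgraphs of $K_3+K_2$ that the paper leaves implicit.
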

\begin{proof}
By Theorem~\ref{thm: pairs list}, any graph of order 1 or 2 comprises a DSF set; hence a DSF triple containing such a graph cannot be minimal. Let $\F$ be a DSF set containing $P_3$. Since $(P_5,K_3+K_2)$ is not an $\F$-breaking pair, $\F$ must contain an induced subgraph $G$ of $K_3+K_2$. However, no matter what $G$ is, $\{P_3,G\}$ appears in Theorem~\ref{thm: pairs list}, so $\F$ cannot be a minimal DSF triple. By Theorem~\ref{thm: pairs list}(1), the complement of a minimal DSF triple containing $K_1+K_2$ would be a minimal DSF triple containing $P_3$, so no such triple exists.
\end{proof}

We now characterize the minimal DSF triples containing $K_3$ or $3K_1$.

\begin{lem} \label{lem: triples with K3}
If $\F$ is a minimal DSF triple containing $K_3$, then $\F$ is one of $\{K_3, P_3+K_1, K_{1,3}\}$, $\{K_3, P_3+K_1, C_4\}$, $\{K_3, P_3+K_1, K_{2,3}\}$, and $\{K_3, 2K_2, K_{1,3}\}$.
\end{lem}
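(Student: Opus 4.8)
The plan is to characterize all minimal DSF triples containing $K_3$ by exploiting the constraint results already established, together with the concrete computation $\D(K_3)$ from Example~\ref{exa: K_3-sieve}. The key observation is that any DSF set $\F$ containing $K_3$ must, by Theorem~\ref{thm: DSF iff D condition holds}, have the property that every element of $\D(K_3)$ induces a member of $\F$. Since $\D(K_3) = \{K_3, P_5, \fourpan, K_{2,3}, K_2+C_4, K_2+C_5, P_3+P_4, 3P_3\}$, each of these eight graphs (other than $K_3$ itself, which is already in $\F$) must contain one of the other two graphs $F_2, F_3 \in \F$ as an induced subgraph. This converts the problem from an unbounded search into a finite combinatorial one: I must select two graphs that, between them, ``cover'' the remaining seven members of $\D(K_3)$ in the induced-subgraph sense.

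First I would use Theorem~\ref{thm: forest of stars} and Theorem~\ref{thm: DSF contains disjoint unions} to pin down the structural roles of $F_2$ and $F_3$. Since $K_3$ is neither a forest of stars nor a disjoint union of cliques of the required form, the remaining two graphs must jointly supply a forest of stars, a complete bipartite graph, and the two clique-union types. Combined with Lemma~\ref{lem: small graphs not in min DSF triples}, which rules out graphs of order at most $3$ (other than $K_3$) and rules out $P_3$ and $K_1+K_2$, this sharply limits the candidates. Next I would run through the seven graphs of $\D(K_3)\setminus\{K_3\}$ and record, for each, the list of its induced subgraphs that could plausibly serve as $F_2$ or $F_3$; for instance, covering $3P_3$ forces one of $F_2, F_3$ to be an induced subgraph of $3P_3$, and since $P_3$ and smaller graphs are excluded, the natural candidate is $P_3+K_1$ or a graph built from $P_3$'s, while covering $K_{2,3}$ and $C_4$ suggests candidates like $K_{1,3}$, $C_4$, or $K_{2,3}$ itself.

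The main obstacle will be the bookkeeping of the covering conditions: I need to verify that each of the four claimed triples genuinely covers all of $\D(K_3)$, and—more delicately—that no other pair $\{F_2, F_3\}$ does so while also respecting minimality. Minimality requires checking both that the triple is DSF (which Theorem~\ref{thm: DSF iff D condition holds} reduces to the covering condition, since $\D(\{K_3, F_2, F_3\}) = \{K_3, F_2, F_3\}$ must hold) and that no two-element subset is already DSF, which follows from Theorem~\ref{thm: pairs list} by confirming that none of $\{K_3, F_2\}$, $\{K_3, F_3\}$, or $\{F_2, F_3\}$ appears on the list of DSF pairs. I expect the forward direction—showing the four listed triples are exactly the minimal ones—to require the most care, since I must argue by exhaustion that candidates such as $P_3+K_1$ (covering $3P_3$, $P_3+P_4$, and $\fourpan$) must be paired with exactly one of $K_{1,3}$, $C_4$, or $K_{2,3}$ to finish covering $K_{2,3}$, $K_2+C_4$, $K_2+C_5$, and $P_5$, and that the alternative of using $2K_2$ forces the partner to be $K_{1,3}$. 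Verifying that these exhaust all possibilities, and that each resulting triple is minimal rather than redundant, is where the proof's real content lies.
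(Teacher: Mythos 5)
Your proposal matches the paper's own proof in all essentials: both arguments rest on the computation of $\D(K_3)$ in Example~\ref{exa: K_3-sieve}, the requirement that each of $P_5$, \fourpan, $K_{2,3}$, $K_2+C_4$, $K_2+C_5$, $P_3+P_4$, and $3P_3$ induce one of the two remaining graphs (each of order at least~$4$ by Lemma~\ref{lem: small graphs not in min DSF triples} and the DSF pair $\{K_3,3K_1\}$), and the resulting case analysis forcing one graph to be $P_3+K_1$ or $2K_2$ and its partner to be an induced subgraph of $K_{2,3}$, with $\{2K_2,C_4\}$ ruled out as an existing DSF pair. Your extra appeals to Theorems~\ref{thm: forest of stars} and~\ref{thm: DSF contains disjoint unions} are unnecessary but harmless, and note that the lemma only asserts necessity, so the sufficiency verification you sketch (which the paper defers to Theorem~\ref{thm: tripleslist} via the unigraph criterion) is not actually required here.
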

\begin{proof}
Let $\F=\{K_3,F,G\}$, and suppose that $\F$ is a minimal DSF set. Since $K_3$ forms a DSF pair with $3K_1$, it follows from Lemma~\ref{lem: small graphs not in min DSF triples} that $|F_2|, |F_3| \geq 4$. By Example~\ref{exa: K_3-sieve}, in order to prevent $\F$-breaking pairs, each element of $\{P_5, \text{4-pan}, K_{2,3}, K_2+C_4, K_2+C_5, P_3+P_4, 3P_3\}$ must induce an element of $\F$. Thus, $P_5$ must induce either $F$ or $G$; without loss of generality we may assume that $P_5$ induces $F$. Since $K_{2,3}$ induces $F$ or $G$, but $P_5$ and $K_{2,3}$ have no common induced subgraphs with at least four vertices, we conclude that $K_{2,3}$ induces $G$. Similarly, since $K_{2,3}$ and $3P_3$ have no common induced subgraphs with at least four vertices, $3P_3$ must induce $F$. Thus $F$ is a graph on at least four vertices induced in $P_5$ and $3P_3$; the only such graphs are $P_3+K_1$ and $2K_2$.

If $F=P_3+K_1$, then by letting $G$ vary over induced subgraphs of $K_{2,3}$ with at least four vertices, we obtain the following as possibilities for $\F$: $\{K_3, P_3+K_1, K_{1,3}\}$, $\{K_3, P_3+K_1, C_4\}$, $\{K_3, P_3+K_1, K_{2,3}\}$.

If $F=2K_2$, then $G$ must be an induced subgraph of both $K_{2,3}$ and the 4-pan graph; thus $G$ is $K_{1,3}$ or $C_4$. Since $\{2K_2,C_4\}$ is a DSF pair, however, the only possibility for a minimal DSF triple in this case is $\{K_3, 2K_2, K_{1,3}\}$.
\end{proof}

\begin{cor} \label{cor: triples with K2K1}
If $\F$ is a minimal DSF triple containing $3K_1$, then $\F$ is one of $\{3K_1, \text{paw}, K_1+K_3\}$, $\{3K_1, \text{paw}, 2K_2\}$, $\{3K_1, \text{paw}, K_2+K_3\}$, and $\{3K_1, C_4, K_1+K_3\}$.
\end{cor}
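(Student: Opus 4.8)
The plan is to reduce Corollary~\ref{cor: triples with K2K1} to Lemma~\ref{lem: triples with K3} via complementation, using Lemma~\ref{lem: complements}. The key observation is that $3K_1 = \overline{K_3}$, so a minimal DSF triple containing $3K_1$ is precisely the complement of a minimal DSF triple containing $K_3$. Indeed, by Lemma~\ref{lem: complements}, $\F$ is DSF if and only if $\F^c$ is DSF, and complementation is an involution that preserves induced-subgraph containment (since $G$ is induced in $H$ iff $\overline{G}$ is induced in $\overline{H}$). Consequently $\F$ is a \emph{minimal} DSF triple containing $3K_1$ if and only if $\F^c$ is a minimal DSF triple containing $\overline{3K_1} = K_3$.

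First I would invoke Lemma~\ref{lem: triples with K3} to list the four minimal DSF triples containing $K_3$, namely $\{K_3, P_3+K_1, K_{1,3}\}$, $\{K_3, P_3+K_1, C_4\}$, $\{K_3, P_3+K_1, K_{2,3}\}$, and $\{K_3, 2K_2, K_{1,3}\}$. Then I would take complements of each triple elementwise. The main computational content is simply identifying the complement of each named graph: $\overline{K_3} = 3K_1$, $\overline{P_3+K_1} = \paw$ (by the definition of the paw given in the text), $\overline{K_{1,3}} = K_1 + K_3$, $\overline{C_4} = 2K_2$, $\overline{K_{2,3}} = K_2 + K_3$, and $\overline{2K_2} = C_4$. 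Substituting these into the four triples yields exactly $\{3K_1, \paw, K_1+K_3\}$, $\{3K_1, \paw, 2K_2\}$, $\{3K_1, \paw, K_2+K_3\}$, and $\{3K_1, C_4, K_1+K_3\}$, which is the claimed list.

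The one point requiring a little care is the completeness direction: I must argue that \emph{every} minimal DSF triple containing $3K_1$ arises this way, not merely that the listed complements are minimal DSF triples. This follows because the complement correspondence $\F \mapsto \F^c$ is a bijection between minimal DSF triples containing $3K_1$ and minimal DSF triples containing $K_3$; since Lemma~\ref{lem: triples with K3} enumerates \emph{all} members of the latter collection, complementing that complete list produces a complete list of the former. I do not anticipate any genuine obstacle here—the proof is essentially a dictionary translation under complementation, and the only thing to verify is that each complement computation is correct and that none of the resulting graphs coincide in a way that would collapse distinct triples.
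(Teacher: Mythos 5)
Your proof is correct and matches the paper's intended derivation: the corollary is stated without proof precisely because it follows from Lemma~\ref{lem: triples with K3} by elementwise complementation via Lemma~\ref{lem: complements}, the same device the paper uses explicitly in Lemma~\ref{lem: small graphs not in min DSF triples}(b). Your complement computations ($\overline{P_3+K_1}=\mathrm{paw}$, $\overline{K_{1,3}}=K_1+K_3$, $\overline{K_{2,3}}=K_2+K_3$, $\overline{C_4}=2K_2$) and the observation that complementation preserves minimality are all accurate.
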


As we shall see in Section~\ref{sec: tripleslist}, the sets in Lemma~\ref{lem: triples with K3} and Corollary~\ref{cor: triples with K2K1} are indeed minimal DSF triples.

\subsection{Minimal DSF triples of large graphs}

We now search for minimal DSF triples $\F=\{F_1,F_2,F_3\}$ where each graph has at least four vertices.  Our search algorithm consists of two phases: in the first phase, we construct a list of candidate triples that satisfy the six necessary graph classes.  In the second phase, we then check each of those triples to determine if it is a minimal DSF triple.

\subsubsection{Phase I: constructing potential triples}

By Theorems~\ref{thm: forest of stars} and \ref{thm: DSF contains disjoint unions}, a minimal DSF triple $\F$ must contain a representative from each of the following graph classes:
\begin{align*}
\B & =\{\textrm{complete bipartite graphs}\};\\
\K & =\{\textrm{disjoint unions of cliques, at most one with more than two vertices}\};\\
\S & =\{\textrm{forests of stars}\}.
\end{align*}
Let $\B^{c}$, $\K^{c}$, and $\S^{c}$ denote the sets of complements of graphs in these respective classes.  Theorems~\ref{thm: forest of stars} and \ref{thm: DSF contains disjoint unions}, also imply that $\F$ also contains a representative from $\B^{c}$, $\K^{c}$, and $\S^{c}$.  Note that graphs in $\B^c$ are disjoint unions of at most two complete graphs, and graphs in $\K^c$ are complete multipartite graphs.

 As $\F$ only contains three graphs, a graph in $\F$ must lie in the intersection of more than one of the six graph classes.  In Table~\ref{tab: pairwise intersections}, we describe the pairwise intersections of these families, listing only graphs with at least four vertices. The first two columns of each row show which families are involved in the intersection. The first column shows a typical element of the first set, indicating the notation used for describing a graph of the class. The third column shows the restrictions imposed on these parameters by membership in the second graph class.

\begin{table}
\caption{\label{tab: pairwise intersections}Lists the pairwise intersections of the graph classes, considering only graphs with at least four vertices.}
\begin{center}
\begin{tabular}{|c|c|c|}
\hline
First class&
Second&
Graphs in the intersection\tabularnewline
\hline
\hline
\multirow{5}{*}{$K_{a_{1},a_{2}}\in\B$, $a_{1}\le a_{2}$}&
$\B^{c}$&
none\tabularnewline
\cline{2-3}&
$\K$&
$a_{1}=0, a_2 \geq 4$\tabularnewline
\cline{2-3}&
$\K^{c}$&
$a_{1}\le2$\tabularnewline
\cline{2-3}&
$\S$&
$a_{1}\le1$\tabularnewline
\cline{2-3}&
$\S^{c}$&
$a_{1}=a_{2}=2$\tabularnewline
\hline
\multirow{4}{*}{$K_{b_{1}}+K_{b_{2}}\in\B^{c}$, $b_{1}\le b_{2}$}&
$\K$&
$b_{1}\le2$\tabularnewline
\cline{2-3}&
$\K^{c}$&
$b_{1}=0$, $b_2 \geq 4$\tabularnewline
\cline{2-3}&
$\S$&
$b_{1}=b_{2}=2$\tabularnewline
\cline{2-3}&
$\S^{c}$&
$b_{1}\le1$\tabularnewline
\hline
\multirow{3}{*}{$K_{c_{3}}+c_{2}K_{2}+c_{1}K_{1}\in\K$,
$c_{3}=0$ or $c_{3}\ge3$}&
$\K^{c}$&
($c_{3}=c_{2}=0$) or ($c_{2}=c_{1}=0$)\tabularnewline
\cline{2-3}&
$\S$&
$c_{3}=0$\tabularnewline
\cline{2-3}&
$\S^{c}$&
$c_{1}\le1$ and $c_{2}=0$\tabularnewline
\hline
\multirow{2}{*}{$c_{3}K_{1}\vee(\overline{c_{2}K_{2}})\vee K_{c_{1}}\in\K^{c}$,
$c_{3}=0$ or $c_{3}\ge3$}&
$\S$&
$c_{1}\le1$ and $c_{2}=0$\tabularnewline
\cline{2-3}&
$\S^{c}$&
$c_{3}=0$\tabularnewline
\hline
$\S$&
$\S^{c}$&
none\tabularnewline
\hline
\end{tabular}
\end{center}
\end{table}

Note that the intersection of $\B$ and $\B^c$ is empty for graphs with at least four vertices. Additionally, complete bipartite graphs and their complements are parametrized by only two values.  Thus, for our minimal DSF triple $\F$, we choose $F_1\in\B$ and $F_2\in \B^c$.  The specific choices of $F_1$ and $F_2$ may also be in $\K$, $\K^c$, $\S$, and $\S^c$.  The graph classes that $F_1$ and $F_2$ do not satisfy constrain the choice of $F_3$.

It is possible that $F_1$ and $F_2$ by themselves satisfy all six graphs classes, thus placing no constraints on $F_3$.  The following lemma describes the graphs $F_1$ and $F_2$ when this occurs.

\begin{lem}\label{lem: F1 and F2 satisfy all classes}
Let $F_{1}=K_{a_{1},a_{2}}\in\B$ and $F_{2}=K_{b_{1}}+K_{b_{2}}\in\B^{c}$, where $a_{1}\le a_{2}$ and $b_{1}\le b_{2}$ and both graphs have at least four vertices. The set $\{F_1,F_2\}$ contains an element from each of $\K$, $\K^c$, $\S$, and $\S^c$ if and only if both $a_1$ and $b_1$ are at most $1$ or $a_{1}=a_{2}=b_{1}=b_{2}=2$.
\end{lem}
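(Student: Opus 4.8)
The plan is to convert each of the four class-memberships into an explicit arithmetic condition on $a_1,a_2,b_1,b_2$ using the rows of Table~\ref{tab: pairwise intersections}, and then settle the biconditional by a short disjunctive case analysis. Recalling that both graphs have at least four vertices, the table gives that $F_1=K_{a_1,a_2}$ lies in $\K$ exactly when $a_1=0$ (with $a_2\ge 4$), in $\K^c$ exactly when $a_1\le 2$, in $\S$ exactly when $a_1\le 1$, and in $\S^c$ exactly when $a_1=a_2=2$; symmetrically, $F_2=K_{b_1}+K_{b_2}$ lies in $\K$ exactly when $b_1\le 2$, in $\K^c$ exactly when $b_1=0$ (with $b_2\ge 4$), in $\S$ exactly when $b_1=b_2=2$, and in $\S^c$ exactly when $b_1\le 1$. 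Thus the hypothesis that $\{F_1,F_2\}$ meets each of $\K,\K^c,\S,\S^c$ becomes four disjunctions, one per class, each asserting that $F_1$ or $F_2$ satisfies the corresponding inequalities.

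For the reverse implication I would substitute the two families from the conclusion into these disjunctions. If $a_1\le 1$ and $b_1\le 1$, then $F_1\in\S$ and $F_1\in\K^c$ (since $a_1\le 2$), while $F_2\in\S^c$ and $F_2\in\K$ (since $b_1\le 2$), so all four classes are met; if $a_1=a_2=b_1=b_2=2$, then $F_1=C_4$ supplies $\K^c$ and $\S^c$ while $F_2=2K_2$ supplies $\K$ and $\S$.

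For the forward implication, the key observation is that only the $\S$ and $\S^c$ conditions are needed. Meeting $\S$ forces $a_1\le 1$ or $b_1=b_2=2$. In the first branch, meeting $\S^c$ requires $a_1=a_2=2$ or $b_1\le 1$; since $a_1\le 1$ excludes the former, we obtain $b_1\le 1$, placing us in the first family. In the second branch, $b_1=2$ excludes $b_1\le 1$ in the $\S^c$ condition, forcing $a_1=a_2=2$, which together with $b_1=b_2=2$ places us in the second family. The argument is essentially bookkeeping, so I anticipate no genuine difficulty; the only step requiring care is reading the membership criteria off Table~\ref{tab: pairwise intersections} correctly, especially the degenerate entries $a_1=0$ (the edgeless graph $a_2K_1$) and $b_1=0$ (the clique $K_{b_2}$), and keeping in mind that the standing assumption of at least four vertices is exactly what forces both parts to equal $2$ in the $\S$ and $\S^c$ cases.
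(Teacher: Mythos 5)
Your proposal is correct and takes essentially the same route as the paper: the reverse direction is the same table lookup, and your forward direction's branch analysis (where $a_1\le 1$ excludes $a_1=a_2=2$, and $b_1=2$ excludes $b_1\le 1$) is precisely the paper's observation that no graph on at least four vertices lies in both $\S$ and $\S^c$, so one of $F_1,F_2$ must be in each. You also correctly flag the one delicate point, that the four-vertex hypothesis is what turns the conditions for $\S$ and $\S^c$ membership into $b_1=b_2=2$ and $a_1=a_2=2$.
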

\begin{proof}
From Table~\ref{tab: pairwise intersections} we see that if $a_1$ and $b_1$ are both at most 1, then $F_1 \in \B \cap \K^c \cap \S$ and $F_2 \in \B^c \cap \K \cap \S^c$. If $a_1=a_2=b_1=b_2$, then $F_1 \in \B \cap \K^c \cap \S^c$ and $F_2 \in \B^c \cap \K \cap \S$.

Conversely, suppose that $\{F_1,F_2\}$ contains an element from each of the four listed classes. The intersection of $\S$ and $\S^{c}$ contains no graph with four or more vertices. Hence, $F_{1}$ belongs to one of $\S$ or $\S^{c}$, and $F_{2}$ belongs to the other. If $F_{1}\in\S$ and $F_{2}\in\S^{c}$, then $a_{1}\le1$ and $b_{1}\le1$, as listed in Table~\ref{tab: pairwise intersections}. If instead $F_{1}\in\S^{c}$ and $F_{2}\in\S$, then $a_{1}=a_{2}=2$ and $b_{1}=b_{2}=2$.
\end{proof}

Note that in the last case, $\{ F_{1},F_{2}\}$ is $\{ C_{4},2K_{2}\}$, which is itself a DSF pair and hence is contained in no minimal DSF triple.  Thus, if $a_1\le 1$ and $b_1 \le 1$, then there is no constraint from the graph classes on the choice of $F_3$.  The following lemmas handle these cases.

\begin{lem}\label{lem: empty graph not DSF}
Suppose that $F_1=K_{a_1,a_2}$ and $F_2=K_{b_1}+K_{b_2}$.
If $a_2\ge4$ and $F_2$ and $F_3$ each have at least $4$ vertices, then $\{ K_{0,a_2},F_2,F_3\}$ is not a DSF set.
By considering complements, if $b_2\ge 4$ and $F_1$ and $F_3$ each have at least $4$ vertices, then $\{F_1,K_{b_2},F_3\}$ is not a DSF set.
\end{lem}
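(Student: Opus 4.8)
The plan is to establish the first statement and obtain the second by complementation. Indeed $\overline{\{F_1,K_{b_2},F_3\}}=\{K_{a_1}+K_{a_2},\,b_2K_1,\,\overline{F_3}\}$, and since $b_2K_1=K_{0,b_2}$ with $b_2\ge4$, the graph $K_{a_1}+K_{a_2}\in\B^c$ has at least four vertices, and $\overline{F_3}$ has at least four vertices, this complement has exactly the form treated in the first statement; so once the first statement is proved, Lemma~\ref{lem: complements} yields the second. Throughout I write $K_{0,a_2}=a_2K_1$.

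I would argue by contradiction, supposing $\F=\{a_2K_1,F_2,F_3\}$ is DSF. First I would note that the pair $\{a_2K_1,F_2\}$ is \emph{not} DSF: when $a_2\ge4$ and $|V(F_2)|\ge4$, it is none of the sets listed in Theorem~\ref{thm: pairs list}. Consequently $\{a_2K_1,F_2\}$-breaking pairs exist, and for any such pair $(H,H')$ the graph $H'$ is $\{a_2K_1,F_2\}$-free; were $H'$ also $F_3$-free it would be $\F$-free, making $(H,H')$ an $\F$-breaking pair and contradicting that $\F$ is DSF. Hence \emph{every} $\{a_2K_1,F_2\}$-free graph arising as the free half of such a breaking pair must induce $F_3$. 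Equivalently, by Theorem~\ref{thm: DSF iff D condition holds} every element of $\D(F_2)$ induces an element of $\F$; no element of $\D(F_2)$ other than $F_2$ induces $F_2$, so each element of independence number less than $a_2$ is forced to induce $F_3$.

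To reach a contradiction I would exhibit two $\{a_2K_1,F_2\}$-breaking pairs $(H_1,H_1')$ and $(H_2,H_2')$ whose free graphs $H_1'$ and $H_2'$ each have independence number less than $a_2$ (so that neither induces $a_2K_1$, forcing each to induce $F_3$) and that share no common induced subgraph on four or more vertices; this is the same induced-subgraph-incompatibility device used in Lemma~\ref{lem: triples with K3}. Since $F_3$ has at least four vertices and is induced in both $H_1'$ and $H_2'$, this is impossible. Each certificate $H_i'$ must simultaneously be $F_2$-free, have independence number below $a_2$, and be a realization of a degree sequence that also admits a realization $H_i$ inducing $a_2K_1$ or $F_2$; the two certificates should differ in local structure so that their $4$-vertex induced subgraphs form disjoint families.

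The construction of these two incompatible certificates, uniformly in $a_2\ge4$ and in $F_2=K_{b_1}+K_{b_2}$, is the main obstacle. The governing tension is that any realization with an independent set of size $a_2$ on $n$ vertices must contain at least $a_2$ vertices of degree at most $n-a_2$; hence each certificate is forced to carry many low-degree vertices, so neither can be made uniformly dense, and arranging total induced-subgraph incompatibility between two such graphs requires care in how those low-degree vertices are joined. I would organize the construction by splitting on $b_1$: the case $b_1=0$, where $F_2=K_{b_2}$ is a single clique of small independence number, behaves differently from $b_1\ge1$, where $F_2$ is disconnected and contains an edge; in each case Lemma~\ref{lem: complements} can be used to halve the bookkeeping.
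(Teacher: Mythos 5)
Your complementation reduction of the second statement to the first is correct and matches the paper's intent, and your forcing framework is sound: since $\{a_2K_1,F_2\}$ appears nowhere in Theorem~\ref{thm: pairs list} when $a_2\ge 4$ and $|V(F_2)|\ge 4$, breaking pairs exist, and any $\{a_2K_1,F_2\}$-free graph that is the second member of such a pair must induce $F_3$ if $\F$ were DSF. But the heart of the proof is missing, and you say so yourself: the construction of the certificates is deferred as ``the main obstacle.'' What you have is a proof strategy, not a proof, and the strategy's endgame is the wrong one. You ask for \emph{two} certificates whose families of induced subgraphs on four or more vertices are entirely disjoint, so that no single $F_3$ can be induced in both. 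That is much stronger than what is needed and is very likely unachievable in the hardest subcase: as you yourself observe, any free half of a breaking pair whose partner induces $a_2K_1$ must carry many low-degree vertices, so admissible certificates all tend to induce sparse four-vertex graphs such as $K_2+2K_1$ or $P_3+K_1$; two of them will not have disjoint subgraph families. Indeed, the paper's proof reveals that when $F_2=K_1+K_{b_2}$ the candidate $F_3$ survives all the generic certificates and is narrowed only gradually --- first to an induced subgraph of $P_4+(a_2-3)K_1$, then to an induced subgraph of $K_2+(a_2-2)K_1$, and finally to exactly $K_2+2K_1$ --- before being killed by a single six-vertex pair $(H_6,H_6')$ with degree sequence $(3,3,3,2,2,1)$ chosen \emph{after} $F_3$ is pinned down. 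The contradiction is adaptive, not a global incompatibility of two fixed graphs.

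Two further points where your outline diverges from what actually has to happen. First, the paper's certificates (e.g.\ $\mathrm{house}+(a_2-3)K_1$) are \emph{not} $F_2$-free for all admissible $F_2$: the house induces $K_3$, so $F_2=K_1+K_3$ can hide inside them, and a substantial portion of the paper's argument is devoted to ruling out the cases $F_2\in\{K_1+K_3,\,2K_2,\,K_2+K_3\}$ with auxiliary pairs such as $(\cofourpan,\fourpan)$ and $(K_2+K_3,P_5)$. Your plan assumes certificates that are $F_2$-free uniformly across the case split $b_1=0$ versus $b_1\ge 1$, but the constraints on $F_2$ are themselves \emph{derived} mid-proof: once $F_3$ is confined to $K_2+(a_2-2)K_1$, neither $a_2K_1$ nor $F_3$ lies in $\B^c$ or $\S^c$, so Theorem~\ref{thm: forest of stars} and Theorem~\ref{thm: DSF contains disjoint unions} force $F_2\in\B^c\cap\S^c$ and hence $b_1\le 1$; the subsequent certificate $H_5'$ of Figure~\ref{fig: lem_3_7} depends on the parameter $b_2$. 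This interplay between the unknown $F_2$ and the unknown $F_3$ is absent from your outline. Second, a small slip: in your application of Theorem~\ref{thm: DSF iff D condition holds}, the element $F_2$ of $\D(F_2)$ itself has independence number at most $2<a_2$ and induces $F_2$, so your claim that \emph{each} element of $\D(F_2)$ of independence number below $a_2$ must induce $F_3$ needs to exclude $F_2$; this is easily repaired but symptomatic of the level of care the actual case analysis demands.
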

\begin{proof}
Suppose to the contrary that $\{ K_{0,a_2},F_2,F_3\}$ is a minimal DSF set, and let $F_2=K_{b_1}+K_{b_2}$ as in the hypothesis. Consider the following $K_{0,a_{2}}$-breaking pairs: \begin{align*}
H_{1}=P_{5}+(a_{2}-3)K_{1} & \rightrightarrows H_{1}'=K_{2}+K_{3}+(a_{2}-3)K_{1}\\
H_{2}=K_{2,3}+(a_{2}-3)K_{1} & \rightrightarrows H_{2}'=\textrm{house}+(a_{2}-3)K_{1}\\
H_{3}=2P_{3}+(a_{2}-4)K_{1} & \rightrightarrows H_{3}'=P_{2}+P_{4}+(a_{2}-4)K_{1}\end{align*}
Since $\mathcal{F}=\{ K_{0,a_{2}},F_{2},F_{3}\}$ is DSF, for each $i \in \{1,2,3\}$, the graph $H'_i$ induces $F_{2}$ or $F_{3}$.

We first note that $F_2$ cannot be induced in $H'_2$ or $H'_3$. Indeed, $F_2$ is a disjoint union of at most two complete graphs, and the unique such induced subgraphs of $H'_2$ and $H'_3$ are $K_1+K_3$ and $2K_2$, respectively. Suppose $F_2$ is one of these graphs. Since $(\cofourpan,\fourpan)$ is a $\{K_{0,a_2}, F_2\}$-breaking pair, $F_3$ must be induced in the 4-pan. Let $H_4$ and $H'_4$ be the two realizations of $(4,4,3,3,2,2)$ shown in Figure~\ref{fig: lem_3_7}. 
The pair $(H_4,H'_4)$ is also $\{K_{0,a_2},F_2\}$-breaking, so $F_3$ is induced in $H'_4$; thus $F_3$ is $C_4$ or $K_{1,3}$. However, in every case this yields a contradiction: $\{K_{0,a_2},2K_2,K_{1,3}\}$ contains no element of $\mathbb{S}^c$; $\{K_{0,a_2},2K_2,C_4\}$ properly contains $\{2K_2,C_4\}$, a DSF set; and $(K_2+K_3,P_5)$ is a breaking pair for both $\{K_{0,a_2}, K_1+K_3, C_4\}$ and $\{K_{0,a_2}, K_1+K_3, K_{1,3}\}$.

Thus $F_3$ is induced in both $H'_2$ and $H'_3$. This forces $F_3$ to be an induced subgraph of $P_4 + (a_2-3)K_1$.

We now claim that $F_3$ is induced in $H'_1$. Suppose this is not the case. Then $F_2$ is induced in $H_1'$ and must be either $K_1+K_3$ or $K_2+K_3$ (we know that $F_2 \neq 2K_2$, since $F_2$ is not induced in $H'_3$). In either case, $(K_2+K_3,P_5)$ is a $\{K_{0,a_2},F_2\}$-breaking pair, so $F_3$ is induced in $P_5$. Since $F_3$ is also induced in $P_4 + (a_2-3)K_1$, we find that $F_3$ is $P_4$ or $P_3+K_1$. This is a contradiction if $F_2=K_2+K_3$, since $\{K_{0,a_2},K_2+K_3,F_3\}$ contains no element of $\S^c$. Furthermore, the pair $(H_4,H'_4)$ in Figure~\ref{fig: lem_3_7} is $\{K_{0,a_2},K_1+K_3,F_3\}$-breaking whether $F_3$ is $P_4$ or $P_3+K_1$.

Hence $F_3$ is induced in $H'_1$; since it is also induced in $P_4+(a_2-3)K_1$, we find that $F_3$ is induced in $K_2+(a_2-2)K_1$. The classes $\B^c$ and $\S^c$ both contain neither $K_{0,a_2}$ nor $F_3$; thus, $F_2$ belongs to $\B^c \cap \S^c$. As noted in Table~\ref{tab: pairwise intersections}, this implies that $b_1\le 1$. Let $H_5=K_2+K_{b_2}$, and let $H'_5$ be the graph obtained by deleting an edge $uv$ from $K_{b_2}$ and attaching a pendant vertex to each of $u$ and $v$, as shown in Figure~\ref{fig: lem_3_7}.  Since $\alpha(H'_5) < 4$ and $\omega(H_5')<b_2$, the pair $(H_5,H_5')$ is $\{K_{0,a_2},F_2\}$-breaking. Thus $F_3$ is induced in $H_5'$.  Since $F_3$ has at least $4$ vertices, this implies that $F_3=K_2+2K_1$ and $b_2 \geq 4$. Let $H_6$ and $H'_6$ be the graphs with degree sequence $(3,3,3,2,2,1)$ shown in Figure~\ref{fig: lem_3_7}.
Since $H_6$ induces $K_2+2K_1$ but $H'_6$ does not, and $\alpha(H'_6)<4$ and $\omega(H_6')<b_2$, the pair $(H_6,H'_6)$ is $\{F_1,F_2,F_3\}$-breaking, yielding a contradiction.
\end{proof}
\begin{figure}
\centering
\includegraphics{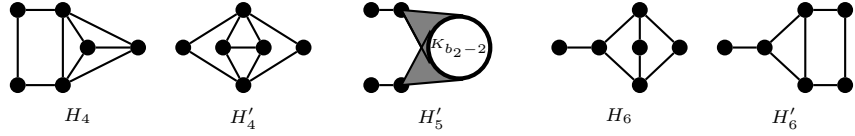}
\caption{Graphs from Lemmas~\ref{lem: empty graph not DSF} and~\ref{lem: star and star complement not DSF}.}
\label{fig: lem_3_7}
\end{figure}

\begin{lem}\label{lem: star and star complement not DSF}
If $a_2\ge3$ and $b_2\ge3$ and $F_3$ has at least $4$ vertices, then $\{ K_{1,a_2},K_1+K_{b_2},F_3\}$ is not a DSF set.
\end{lem}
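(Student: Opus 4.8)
The plan is to assume, for contradiction, that $\F=\{K_{1,a_2},K_1+K_{b_2},F_3\}$ is DSF, and then to manufacture enough $\{K_{1,a_2},K_1+K_{b_2}\}$-breaking pairs that the requirement ``$F_3$ is induced in the second graph of each pair'' pins $F_3$ down to a short explicit list, each member of which can then be eliminated by one further breaking pair. Throughout I would exploit the complement symmetry of Lemma~\ref{lem: complements}: since $\overline{K_{1,a_2}}=K_1+K_{a_2}$ and $\overline{K_1+K_{b_2}}=K_{1,b_2}$, the set $\F$ is DSF if and only if $\{K_1+K_{a_2},K_{1,b_2},\overline{F_3}\}$ is, which has the same shape with $a_2$ and $b_2$ interchanged and $F_3$ replaced by $\overline{F_3}$. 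Hence every constraint I derive for $F_3$ on the ``star side'' has a dual constraint for $\overline{F_3}$, and I may assume $a_2\le b_2$.

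The engine is a single join construction. Suppose $G$ and $G'$ have the same degree sequence with $\alpha(G)\ge a_2$ and $\alpha(G')=a_2-1$. Form the joins $G\vee K_1$ and $G'\vee K_1$ by adding one vertex adjacent to everything; these have equal degree sequences. The apex of $G\vee K_1$ sees an independent set of size $a_2$, so $G\vee K_1$ induces $K_{1,a_2}$. In $G'\vee K_1$, on the other hand, the apex sees only $\alpha(G')=a_2-1$ pairwise nonadjacent vertices, and every other vertex $v$ has its neighbourhood independence bounded by $\alpha(G')$ as well (the apex cannot join any larger independent set), so $G'\vee K_1$ is $K_{1,a_2}$-free; moreover, because the apex is universal, every maximal clique of $G'\vee K_1$ passes through it and so has no private non-neighbour, whence $G'\vee K_1$ is $(K_1+K_{b_2})$-free as soon as $\omega(G')<b_2$. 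Thus $(G\vee K_1,\,G'\vee K_1)$ is a $\{K_{1,a_2},K_1+K_{b_2}\}$-breaking pair, forcing $F_3$ to be an induced subgraph of $G'\vee K_1$.

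I would feed this machine the $K_{0,a_2}$-breaking pairs already built in the proof of Lemma~\ref{lem: empty graph not DSF}, namely the co-realizations $P_5+(a_2-3)K_1 \rightrightarrows K_2+K_3+(a_2-3)K_1$, $K_{2,3}+(a_2-3)K_1\rightrightarrows \textrm{house}+(a_2-3)K_1$, and $2P_3+(a_2-4)K_1\rightrightarrows P_2+P_4+(a_2-4)K_1$, each of which has independence numbers $a_2$ and $a_2-1$. Each second graph $G'$, joined with an apex, then constrains $F_3$; intersecting the resulting conditions ``$F_3$ is induced in $G'\vee K_1$'' (together with the dual conditions on $\overline{F_3}$ coming from the $K_{1,b_2}$-side) reduces $F_3$ to finitely many candidates, all with $\omega(F_3)\le 3$ and $\alpha(F_3)\le 3$. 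For each surviving candidate I would then produce one final breaking pair on a small number of vertices, padded with isolated vertices as needed, exactly in the style of the endgame of Lemma~\ref{lem: empty graph not DSF}, to contradict the assumption that $\F$ is DSF.

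The main obstacle is the two boundary cases $a_2=3$ and $b_2=3$, which are precisely the smallest allowed values. When $b_2=3$ the construction above fails for every pair whose $G'$ contains a triangle, since then $G'\vee K_1$ induces $K_1+K_3=F_2$ and the pair is no longer breaking; only triangle-free co-realizations such as $P_2+P_4+(a_2-4)K_1$ survive, so in this regime I must supply additional tailored breaking pairs (and, dually, handle $a_2=3$ on the complement side) and check that the weaker collection of constraints still confines $F_3$ to a list I can eliminate. Managing this extremal casework, rather than the join construction itself, is where the real work lies.
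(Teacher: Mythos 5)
Your join engine is sound as far as it goes: if $\alpha(G')<a_2\le\alpha(G)$ and $\omega(G')<b_2$, then $(G\vee K_1,\,G'\vee K_1)$ really is a $\{K_{1,a_2},K_1+K_{b_2}\}$-breaking pair, and this is a genuinely different device from the paper's, which instead normalizes $3\le b_2\le a_2$ and uses breaking pairs for $K_1+K_{b_2}$ whose second graphs, $b_2K_{1,b_2-1}$ and the graph $H'_5$ of Figure~\ref{fig: lem_3_7}, have maximum degree $b_2-1\le a_2-1$, so $K_{1,a_2}$-freeness comes for free with no join needed. But there is a genuine gap at the decisive step: ``intersecting the resulting conditions\dots reduces $F_3$ to finitely many candidates'' is asserted, not performed, and your machine is structurally too weak to perform it. Every second graph $G'\vee K_1$ has a universal vertex, so its induced subgraphs are exactly the induced subgraphs of $G'$ together with all graphs $H\vee K_1$ with $H$ induced in $G'$; in particular each such graph contains triangles, induced $P_4$'s, and stars as large as $K_{1,a_2-1}$. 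Hence your filter can never yield the two properties that drive the paper's argument --- there, the disconnected, sparse second graph $b_2K_{1,b_2-1}$ forces $F_3$ to be triangle-free with every component of diameter at most $2$, and intersecting with $H'_5$ instantly pins $F_3\in\{2K_2,\,P_3+K_1\}$. What your feeds actually extract ($\omega(F_3)\le 3$ and, dually, $\alpha(F_3)\le 3$) is nowhere near a finite checkable list: since $R(4,4)=18$ and Corollary~\ref{cor: bounds when k is 3} allows $F_3$ up to $18$ vertices, graphs satisfying both bounds exist on up to $17$ vertices, and the finer conditions (``$F_3$ induced in $G'$, or $F_3$ minus a dominating vertex induced in $G'$'') still leave many candidates for which you supply no elimination mechanism.

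The second gap is that the extremal cases, which you explicitly defer, are exactly where the lemma's content lives --- and your own boundary analysis of them is off. Under your normalization $a_2\le b_2$, the case $b_2=3$ forces $a_2=3$, and then \emph{all three} feeds fail: $K_2+K_3$ and the house contain triangles, while $P_2+P_4+(a_2-4)K_1$ --- which you claim survives --- requires $a_2\ge 4$ to exist. So in the corner $a_2=b_2=3$ your machinery produces no constraint on $F_3$ whatsoever. The paper's pairs, by contrast, work uniformly for all $b_2\ge3$, and even so it needs tailored pairs to finish: $(K_{2,3},\mathrm{house})$ to force (in its normalization) $a_2\ge4$, $(\cofourpan,\fourpan)$ to kill $F_3=2K_2$, and $(H_4,H'_4)$ resp.\ $(H_7,H'_7)$ to kill $F_3=P_3+K_1$ when $b_2=3$ resp.\ $b_2\ge4$. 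Since both the candidate-list reduction and these endgame eliminations are left unexecuted in your write-up, what you have is a plausible program rather than a proof.
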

\begin{proof}
Since $\{ K_{1,a_2},K_1+K_{b_2},F_3\}$ is a minimal DSF set if and only if $\{ K_{1,b_2}, K_1+K_{a_2}, \overline{F_3}\}$ is, for convenience we assume that $3 \le b_2 \le a_2$ and that $\{ K_{1,a_2},K_1+K_{b_2},F_3\}$ is DSF.

Consider the $K_1+K_{b_2}$-breaking pairs $(K_{b_2}+\binom{b_2}{2} K_2,b_2 K_{1,b_2-1})$ and $(K_2+K_{b_2}, H'_5)$, where $H'_5$ is illustrated in Figure~\ref{fig: lem_3_7}. Since the maximum degree of the second graph in both pairs is $b_2-1$ and $b_2\le a_2$, the graph $K_{1,a_2}$ is induced in neither of these second graphs.  Thus, $F_3$ is induced in both $b_2 K_{1,b_2-1}$ and $H'_5$.  Since $b_2 K_{1,b_2-1}$ is triangle-free, $F_3$ is a triangle-free induced subgraph of $H'_5$, which implies that $F_3$ is an induced subgraph of $P_5$.  Since $b_2 K_{1,b_2-1}$ contains no paths of length 3, each component of $F_3$ has diameter at most $2$, and thus $F_3$ is either $2K_2$ or $P_3+K_1$.

Note that in either case $(K_{2,3},\text{house})$ is a $\{K_{1,3},K_1+K_3,F_3\}$-breaking pair, so $a_2 \geq 4$. If $F_3=2K_2$, we then have a contradiction, for $(\cofourpan,\fourpan)$ is $\{K_{1,a_2}, K_1+K_{b_2}, 2K_2\}$-breaking.

Thus $F_3=P_3+K_1$. If $b_2=3$, then the pair $(H_4,H'_4)$ in Figure~\ref{fig: lem_3_7} is $\{K_{1,a_2}, K_1+K_{b_2}, P_3+K_1\}$-breaking, a contradiction. For $b_2 \geq 4$, consider the pair $(H_7,H'_7)$ illustrated in Figure~\ref{fig: lem_3_8}, where $H_7$ is formed by taking two nonadjacent vertices and attaching both to every vertex of $P_3+K_1$, and $H'_7$ is formed by adding three edges to $K_2+K_4$ as shown. Since $\alpha(H'_7)<3$, it is easy to see that $(H_7,H'_7)$ is a $\{K_{1,a_2}, K_1+K_{b_2}, P_3+K_1\}$-breaking pair, a contradiction.
\end{proof}
\begin{figure}
\centering
\includegraphics{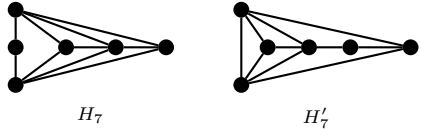}
\caption{Graphs from Lemma~\ref{lem: star and star complement not DSF}.}
\label{fig: lem_3_8}
\end{figure}

Algorithm~\ref{alg:find_potential_triples} presents the first phase of the algorithm in pseudocode.  The output is a list of candidate triples $\F$ that satisfy each of the six graph classes, as well as the bounds of Lemma~\ref{lem: min sizes differ by at most 2} and Corollary~\ref{cor: bounds when k is 3}, but that are not ruled out by Lemmas~\ref{lem: empty graph not DSF} and \ref{lem: star and star complement not DSF}.

\renewcommand{\algorithmicforall}{\textbf{for each}}
\renewcommand{\algorithmiccomment}[1]{\textit{#1}}

\medskip
\begin{alg}\label{alg:find_potential_triples}
Algorithm for Phase I for finding candidate triples.
\medskip
  \begin{algorithmic}[1]
    \FORALL{$F_1=K_{a_1,a_2}\in\B$, where $1 \le a_1 \le a_2$ and $4\le a_1+a_2\le 18$}
      \STATE \COMMENT{We may assume that $1 \le a_1$ by Lemma~\ref{lem: empty graph not DSF}.}
      \STATE Set $n_1 \leftarrow a_1+a_2$
      \FORALL{$F_2=K_{b_1}+K_{b_2}\in\B^c$, where $1 \le b_1 \le b_2$ and $4\le b_1+b_2\le 18$}
        \STATE \COMMENT{We may assume that $1 \le b_1$ by Lemma~\ref{lem: empty graph not DSF}.}
        \STATE Set $n_2 \leftarrow b_1+b_2$ \\\bigskip
        \STATE \COMMENT{Based on $F_1$ and $F_2$ we can eliminate some cases.}
        \IF{($a_1\le 1$ and $b_1\le 1$) or ($a_1=a_2=b_1=b_2=2$)}
          \STATE \COMMENT{By Lemma~\ref{lem: F1 and F2 satisfy all classes}, $F_1$ and $F_2$ already satisfy all six classes. Lemmas~\ref{lem: empty graph not DSF} and \ref{lem: star and star complement not DSF} show that no choices for $F_3$ result in DSF sets, and hence there is no need to check any graphs for $F_3$.}\\
          \STATE \textbf{continue} with the next choice for $F_2$
        \ELSIF{neither $\S$ nor $\S^c$ is satisfied by $F_1$ or $F_2$}
          \STATE \COMMENT{No one graph with at least $4$ vertices can satisfy both $\S$ and $\S^c$, so there is no valid choice for $F_3$.}\\
          \STATE \textbf{continue} with the next choice for $F_2$
        \ENDIF \bigskip

        \FOR{$n_3=4$ to 18}
          \STATE \COMMENT{We will choose an $F_3$ with $n_3$ vertices.}
          \IF{$n_1,n_2,n_3$ do not satisfy Lemma~\ref{lem: min sizes differ by at most 2}}
            \STATE \textbf{continue} with the next choice for $F_3$
          \ENDIF \bigskip

          \STATE
          \COMMENT{We choose $F_3$ to satisfy one of the classes not satisfied by $F_1$ or $F_2$.}
          \IF{$F_1,F_2\notin \K$}
            \FORALL{$F_3=K_{c_{3}}+c_{2}K_{2}+c_{1}K_{1}\in\K$, $c_{3}=0$ or $c_{3}\ge3$, and $c_1+c_2+c_3=n_3$}
              \IF{$F_1,F_2,F_3$ do not satisfy Lemma~\ref{lem: min edge sizes differ by} or Corollary~\ref{cor: bounds when k is 3} on number of edges}
                \STATE \textbf{continue} with the next choice for $F_3$
              \ENDIF
              \IF{$F_1,F_2,F_3$ satisfy all six classes}
                \STATE add $\{F_1,F_2,F_3\}$ to the list of candidate triples
              \ENDIF
            \ENDFOR \COMMENT{ (choosing $F_3\in \K$)} \bigskip

          \ELSIF{$F_1,F_2\notin \K^c$}
            \FORALL{$F_3$ is the complement of $K_{c_{3}}+c_{2}K_{2}+c_{1}K_{1}\in\K$, $c_{3}=0$ or $c_{3}\ge3$, and $c_1+c_2+c_3=n_3$}
              \IF{$F_1,F_2,F_3$ do not satisfy Lemma~\ref{lem: min edge sizes differ by} on number of edges}
                \STATE \textbf{continue} with the next choice for $F_3$
              \ENDIF
              \IF{$F_1,F_2,F_3$ satisfy all six classes}
                \STATE add $\{F_1,F_2,F_3\}$ to the list of candidate triples
              \ENDIF
            \ENDFOR \COMMENT{ (choosing $F_3\in \K^c$)} \bigskip

          \ELSIF{$F_1,F_2\notin \S$}
            \STATE \COMMENT{We choose $F_3$ to be a forest of stars.  Note that this requires generating all partitions of $n_3$.}
            \FORALL{$F_3\in\S$ that is a forest of stars on $n_3$ vertices}
              \IF{$F_1,F_2,F_3$ do not satisfy Lemma~\ref{lem: min edge sizes differ by} on number of edges}
                \STATE \textbf{continue} with the next choice for $F_3$
              \ENDIF
              \IF{$F_1,F_2,F_3$ satisfy all six classes}
                \STATE add $\{F_1,F_2,F_3\}$ to the list of candidate triples
              \ENDIF
            \ENDFOR \COMMENT{ (choosing $F_3\in \S$)} \bigskip

          \ELSIF{$F_1,F_2\notin \S^c$}
            \STATE \COMMENT{We choose $F_3$ to be a complement of a forest of stars.  Note that this requires generating all partitions of $n_3$.}
            \FORALL{$F_3\in\S^c$ that is a complement of a forest of stars on $n_3$ vertices}
              \IF{$F_1,F_2,F_3$ do not satisfy Lemma~\ref{lem: min edge sizes differ by} on number of edges}
                \STATE \textbf{continue} with the next choice for $F_3$
              \ENDIF
              \IF{$F_1,F_2,F_3$ satisfy all six classes}
                \STATE add $\{F_1,F_2,F_3\}$ to the list of candidate triples
              \ENDIF
            \ENDFOR \COMMENT{ (choosing $F_3\in \S^c$)} \bigskip

          \ENDIF \COMMENT{ (which class $F_3$ should be chosen from)}
        \ENDFOR \COMMENT{ (choosing $n_3$)}
      \ENDFOR \COMMENT{ (choosing $F_2$)}
    \ENDFOR \COMMENT{ (choosing $F_1$)}
  \end{algorithmic}
\end{alg}

\subsubsection{Phase II: testing potential triples}

In the second phase of the algorithm, we test each candidate triple $\F$ generated by Phase~I to determine if it is indeed a DSF set.  Algorithm~\ref{alg:test_candidate_triples} describes the second phase in pseudocode.  First, we check that no graph in $\F$ is an induced subgraph of another graph in $\F$.  Second, we check whether $\F$ contains $\{2K_2,C_4\}$, as a minimal DSF triple cannot contain this DSF pair.  Third, we do a brute force search for an $\F$-breaking pair; this search is constrained to graphs with at most two more vertices by Lemma~\ref{lem: at most 2 more vtcs}.  If no $\F$-breaking pair is found, then $\F$ is a minimal DSF triple.

\medskip
\begin{alg}\label{alg:test_candidate_triples}
Algorithm for Phase II for testing each candidate triple.
\medskip
  \begin{algorithmic}[1]
    \FORALL{candidate triple $\F=\{F_1,F_2,F_3\}$}
      \IF{one of the graphs of $\F$ is induced in another}
        \STATE \COMMENT{$\F$ is not a minimal DSF triple.}
        \STATE continue with next choice of $\F$
      \ELSIF{$\F$ contains $\{2K_2,C_4\}$}
        \STATE \COMMENT{$\F$ is not a minimal DSF triple.}
        \STATE continue with next choice of $\F$
      \ENDIF \bigskip

      \FORALL{$F_i\in\F$}
        \FORALL{graph $H$ formed from $F_i$ by adding one or two vertices and some set of edges incident to the new vertices}
          \STATE \COMMENT{Note that $F_i$ is induced in $H$.}
          \FORALL{graph $H'$ formed from $H$ by performing a two switch involving the new vertices}
            \IF{$H'$ does not induce any member of $\F$}
              \STATE report $(H,H')$ as an $\F$-breaking pair
            \ENDIF
          \ENDFOR \COMMENT{ (choosing $H'$)}
        \ENDFOR \COMMENT{ (choosing $H$)}
      \ENDFOR \COMMENT{ (choosing $F_i$)}
      \IF{no breaking pair is found for $\F$}
        \STATE report $\F$ as a minimal DSF triple
      \ENDIF
    \ENDFOR \COMMENT{ (for each candidate triple)}
  \end{algorithmic}
\end{alg}

\subsection{Implementation}

The search algorithm was implemented in the mathematical software Sage~\cite{sage}. The run time was about $6.5$ minutes on a Fedora 14 Linux computer with an Intel Core i5 3.60GHz processor. The Sage code and output is available at the second author's webpage~\cite{webpage}.  Phase~I generated 3058 candidate triples, and the two triples $\{K_3+K_1, C_4, P_3+K_1\}$ and $\{K_{1,3}, 2K_2, \paw\}$ were the only DSF sets found by Phase~II.  About $95\%$ of the total time is spent in Phase~II searching for breaking pairs. Though some breaking pairs found in Phase~II occur for several candidate triples, $246$ distinct breaking pairs were used.  This suggests that there is no short way to rule out the non-DSF candidate triples that would eliminate the use of the computer program.

\subsection{Results}
\label{sec: tripleslist}


The computer search completely determines all minimal DSF triples containing graphs with at least four vertices.  To prove that the candidate triples with small graphs from Lemma~\ref{lem: triples with K3} and Corollary~\ref{cor: triples with K2K1} are in fact DSF sets, we use the concept of unigraphs, though we could also run Phase~II on these triples.  A \emph{unigraph} is a graph that is the unique realization, up to isomorphism, of its degree sequence. The authors and M. Kumbhat noted the following in~\cite{BarrusEtAl08}.

\begin{obs}[\cite{BarrusEtAl08}] \label{obs: unigraph-producing}
Given a set $\F$ of graphs, if every $\F$-free graph is a unigraph, then $\F$ is DSF.
\end{obs}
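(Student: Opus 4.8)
The plan is to argue directly from the definition of a DSF set, using the fact that being $\F$-free is an isomorphism invariant. Recall that $\F$ is DSF precisely when, for any degree sequence $d$, the $\F$-freeness of one realization of $d$ forces the $\F$-freeness of all realizations of $d$. The key point is that a unigraph has, up to isomorphism, only one realization of its degree sequence, so if an $\F$-free realization happens to be a unigraph there are no other realizations to worry about.

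Concretely, I would take an arbitrary degree sequence $d$ and suppose that some realization $G$ of $d$ is $\F$-free; my goal is to show every realization of $d$ is $\F$-free. By hypothesis, the $\F$-free graph $G$ is a unigraph, so $G$ is the unique realization of $d$ up to isomorphism. Hence any other realization $G'$ of $d$ satisfies $G' \cong G$. Since induced subgraph containment is preserved under isomorphism, $G'$ induces exactly the same graphs (up to isomorphism) that $G$ does; in particular $G'$ induces no element of $\F$, and so $G'$ is $\F$-free. As $d$ was arbitrary, this shows no degree sequence admits both an $\F$-free realization and a realization containing a member of $\F$, which is exactly the statement that $\F$ is DSF.

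Equivalently, and perhaps more in keeping with the breaking-pair language already set up in the excerpt, I could argue by contraposition: if $\F$ were not DSF there would be an $\F$-breaking pair $(H,H')$, meaning $H$ and $H'$ share a degree sequence while $H$ induces a member of $\F$ and $H'$ is $\F$-free. The hypothesis would make the $\F$-free graph $H'$ a unigraph, forcing $H \cong H'$; but isomorphic graphs induce the same subgraphs, contradicting that $H$ induces an element of $\F$ while $H'$ does not. I do not anticipate any real obstacle here: the statement is essentially immediate once the definitions of ``unigraph'' and ``DSF'' are unpacked, and the only substantive ingredient is the (routine) observation that $\F$-freeness depends only on the isomorphism type of a graph.
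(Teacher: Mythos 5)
Your proof is correct and is exactly the intended argument: the paper states this observation without proof (citing \cite{BarrusEtAl08}), and the definition-unpacking you give---an $\F$-free realization of a degree sequence is a unigraph, hence the unique realization up to isomorphism, hence every realization is $\F$-free since $\F$-freeness is an isomorphism invariant---is the standard one-line justification. Both your direct version and your contrapositive version via breaking pairs are sound.
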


In~\cite{Barrus12} the first author established a criterion for recognizing such sets $\F$.  The graphs $R$, $\overline{R}$, $S$, and $\overline{S}$ are shown in Figure~\ref{fig: R and S}. The join of two graphs is indicated with the symbol $\vee$.
\begin{figure}
\centering
\includegraphics{RnSnComps.pdf}
\caption{The graphs $R$, $\overline{R}$, $S$, and $\overline{S}$ from Theorem~\ref{thm: unigraphic DSF}.}
\label{fig: R and S}
\end{figure}

\begin{thm}[\cite{Barrus12}]\label{thm: unigraphic DSF}
The $\F$-free graphs are all unigraphs if and only if every element of
\begin{multline*}
\{P_5, \mathrm{house}, K_2 +K_3, K_{2,3}, \textup{4-pan}, \textup{co-4-pan}, 2P_3, (K_2 +K_1) \vee (K_2 +K_1),\\
K_2 +P_4,2K_1 \vee P_4,K_2 +C_4,2K_1 \vee 2K_2,R,\overline{R}, S, \overline{S}\}
\end{multline*}
induces an element of $\F$.
\end{thm}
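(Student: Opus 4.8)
The plan is to reduce the biconditional to two self-standing facts about the class $\mathcal{U}$ of unigraphs and then attack each separately. Write $\mathcal{L}$ for the list of sixteen graphs in the statement. The theorem is equivalent to the conjunction of:
\textbf{(A)} every graph in $\mathcal{L}$ is a non-unigraph, and
\textbf{(B)} every non-unigraph contains some member of $\mathcal{L}$ as an induced subgraph.
Granting (A) and (B), both directions are immediate. For the forward implication, assume every $\F$-free graph is a unigraph and let $L\in\mathcal{L}$; by (A) $L$ is not a unigraph, so the contrapositive of the hypothesis forces $L$ to fail to be $\F$-free, i.e. $L$ induces a member of $\F$. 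For the reverse implication, assume each member of $\mathcal{L}$ induces an element of $\F$ and let $G$ be $\F$-free; were $G$ a non-unigraph, (B) would supply an induced $L\in\mathcal{L}$, $L$ would induce some $F\in\F$, and by transitivity of induced-subgraph containment $G$ would induce $F$, a contradiction. So everything rests on (A) and (B).

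For (A) I would exhibit, for each listed graph, a second non-isomorphic realization of its degree sequence (equivalently a $2$-switch that changes the isomorphism type). Two observations halve the labor. First, a graph is a unigraph if and only if its complement is, since complementation gives a bijection between realizations of $d(G)$ and realizations of $d(\overline{G})$ and carries each $2$-switch to a $2$-switch. Second, $\mathcal{L}$ is closed under complementation, splitting into the eight complementary pairs $\{P_5,\mathrm{house}\}$, $\{K_2+K_3,K_{2,3}\}$, $\{\fourpan,\cofourpan\}$, $\{2P_3,(K_2+K_1)\vee(K_2+K_1)\}$, $\{K_2+P_4,2K_1\vee P_4\}$, $\{K_2+C_4,2K_1\vee 2K_2\}$, $\{R,\overline{R}\}$, and $\{S,\overline{S}\}$. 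Thus it suffices to display one alternate realization for a single representative of each pair—for instance $P_5$ and $C_3+K_2$ both realize $(2,2,2,1,1)$—the complementary graph then being a non-unigraph automatically. This step is a finite, routine check.

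The substance is (B): that the sixteen graphs form a covering family of forbidden induced subgraphs for $\mathcal{U}$. Here the plan is to invoke the structure theory of unigraphs, namely Tyshkevich's canonical decomposition $G=G_0\circ G_1\circ\cdots\circ G_m$ into indecomposable summands together with the known classification of indecomposable unigraphs, and the fact that $G$ is a unigraph precisely when every canonical summand is an indecomposable unigraph. Given a non-unigraph $G$, some summand fails this test; I would localize the failure—within a single summand, or across the composition joining two summands—and argue that the offending configuration forces an induced copy of one of the sixteen graphs. The complement-symmetry exploited in (A) lets the case analysis be organized in complementary pairs, matching each sparse obstruction against its dense complement. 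An alternative, essentially self-contained route would induct on the order: take a minimal non-unigraph avoiding all of $\mathcal{L}$, choose a degree-preserving non-isomorphism realized by a $2$-switch on four pivot vertices, and show that the bounded neighborhood of those pivots already contains a listed graph.

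I expect (B) to be the main obstacle, since it is tantamount to a complete forbidden-subgraph characterization of unigraphs. The delicate points are exhaustiveness—certifying that no further obstruction hides among graphs on six or seven vertices—and, if one wishes to call $\mathcal{L}$ the \emph{minimal} forbidden subgraphs, verifying that deleting any vertex from a listed graph yields a unigraph; the latter refinement also rests on the (separately provable) fact that $\mathcal{U}$ is hereditary, which is not logically needed for the biconditional itself but explains why $\mathcal{L}$ comes out so clean.
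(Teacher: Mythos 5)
A preliminary remark on the comparison you asked for: the paper does not prove this theorem at all --- it is imported verbatim from \cite{Barrus12} and used as a black box in the proof of Theorem~\ref{thm: tripleslist} --- so there is no internal proof to measure your attempt against, and what follows judges the proposal on its own terms. Your reduction of the biconditional to the two facts (A) and (B) is correct, and both directions of the deduction from (A) and (B) are airtight ($\F$-freeness applies to the graph itself, and induced-subgraph containment is transitive, as you use). Your disposal of (A) is also essentially complete: unigraphy is preserved under complementation (complementation is a degree-sequence-determined bijection on realizations carrying 2-switches to 2-switches), the sixteen graphs do split into the eight complementary pairs you list, and one alternate realization per pair suffices --- e.g.\ $P_5$ and $K_3+K_2$ both realize $(2,2,2,1,1)$, the \fourpan{} and the \cofourpan{} both realize $(3,2,2,2,1)$, and $2P_3$ and $K_2+P_4$ both realize $(2,2,1,1,1,1)$.

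The genuine gap is (B), which, as you concede, carries the entire content, and neither of your proposed routes is executed; both also have unaddressed obstacles. Since every non-unigraph contains an induced-minimal non-unigraph (a trivial induction on order, needing no hereditariness), (B) amounts to showing that \emph{every} minimal non-unigraph is among the sixteen; nothing in your outline bounds the order of a minimal non-unigraph, so your worry about obstructions ``among graphs on six or seven vertices'' understates the problem --- a priori a minimal obstruction could have any order, and producing such a bound, or circumventing it via Tyshkevich's canonical decomposition together with the classification of indecomposable unigraphs (including how non-uniqueness can arise across the composition), is precisely the substance of \cite{Barrus12}; gesturing at that machinery is a plan, not a proof. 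Your alternative route additionally assumes, without proof, that a non-unigraph admits a \emph{single} isomorphism-class-changing 2-switch; this is true, but requires the small induction that if every 2-switch out of $G$ preserved the isomorphism class, the same property would propagate along any 2-switch sequence, forcing all realizations to be isomorphic --- and even granting it, the claim that the neighborhood of the four pivot vertices contains a listed graph is exactly the hard case analysis, untouched. Finally, your closing aside is false: the class of unigraphs is \emph{not} hereditary, and this failure is exactly why the theorem is phrased as ``the $\F$-free graphs are all unigraphs'' (it characterizes the graphs all of whose induced subgraphs are unigraphs) rather than as a forbidden-subgraph characterization of unigraphs themselves; your description of (B) as ``tantamount to a complete forbidden-subgraph characterization of unigraphs'' repeats the same misreading. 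The error is harmless to your one-directional statement of (B), but it undercuts the minimality refinement you sketch and signals a misunderstanding of what must actually be proved.
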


Combining our theoretical results and the output of the computer search, we obtain the full list of minimal DSF triples.

\begin{thm} \label{thm: tripleslist}
Following is the complete list of minimal DSF triples:
\begin{center}
\begin{tabular}{lcl}
$\{K_3, K_{1,3}, 2K_2\}$, & & $\{3K_1, K_1+K_3, C_4\}$,\\
$\{K_3, K_{1,3}, P_3+K_1\}$, & & $\{3K_1, K_1+K_3, \paw\}$,\\
$\{K_3, K_{2,3}, P_3+K_1\}$, & & $\{3K_1, K_2+K_3, \paw\}$,\\
$\{K_3, C_4, P_3+K_1\}$, & & $\{3K_1, 2K_2, \paw\}$,\\
$\{K_3+K_1, C_4, P_3+K_1\}$, & & $\{K_{1,3}, 2K_2, \paw\}$.
\end{tabular}
\end{center}
\end{thm}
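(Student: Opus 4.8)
The plan is to prove both inclusions: that each of the ten triples is a minimal DSF set, and that no other minimal DSF triple exists. I would split the argument according to the smallest order of a graph in the triple, treating separately the case where some member has at most three vertices and the case where every member has at least four vertices.

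In the first case, Lemma~\ref{lem: small graphs not in min DSF triples} eliminates all graphs of order at most three except $K_3$ and $3K_1$, so any such minimal triple must contain one of these two. Lemma~\ref{lem: triples with K3} lists the only candidate triples containing $K_3$, and its complement-dual Corollary~\ref{cor: triples with K2K1} lists those containing $3K_1$; together these give exactly the eight triples built from small graphs. The earlier lemmas show only that these are the sole candidates, so to finish I would confirm each is genuinely DSF via the unigraph criterion: by Observation~\ref{obs: unigraph-producing} it is enough to verify, for each triple, that every graph in the sixteen-element list of Theorem~\ref{thm: unigraphic DSF} induces one of its three members, a finite graph-by-graph check (alternatively, Phase~II of the search applies directly). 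Minimality then follows from Theorem~\ref{thm: pairs list}: I would check that none of a triple's three sub-pairs, and no singleton $\{K_1\}$, $\{K_2\}$, $\{2K_1\}$, is itself DSF.

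In the second case the work is carried by the computer search. The finiteness bound of Corollary~\ref{cor: bounds when k is 3} restricts attention to graphs on at most eighteen vertices, while Theorems~\ref{thm: forest of stars} and \ref{thm: DSF contains disjoint unions} force every DSF triple to contain a representative of each of $\B,\B^{c},\K,\K^{c},\S,\S^{c}$. Because a triple has only three members, two of the six classes must be realized by a single graph; Lemma~\ref{lem: F1 and F2 satisfy all classes} pins down when the complete bipartite graph $F_1$ and the disjoint union of two cliques $F_2$ already cover all six classes, and Lemmas~\ref{lem: empty graph not DSF} and \ref{lem: star and star complement not DSF} rule out the degenerate configurations this produces. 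Phase~I (Algorithm~\ref{alg:find_potential_triples}) assembles the surviving candidate triples, and Phase~II (Algorithm~\ref{alg:test_candidate_triples}) tests each for a breaking pair—a search rendered finite by Lemma~\ref{lem: at most 2 more vtcs}, which confines any breaking pair to at most two extra vertices, and by discarding any candidate containing the DSF pair $\{2K_2,C_4\}$. The search returns exactly $\{K_3+K_1, C_4, P_3+K_1\}$ and $\{K_{1,3}, 2K_2, \paw\}$, whose minimality again follows by checking sub-pairs against Theorem~\ref{thm: pairs list}.

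The main obstacle is making the second case tractable rather than merely finite: a direct enumeration over all graphs on up to eighteen vertices is hopeless, on the order of $10^{30}$ graphs. The real content lies in the reduction that forces two of the three graphs into the two-parameter families $\B$ and $\B^{c}$ and then exploits the several class memberships these small-parameter graphs must simultaneously carry, together with the case-killing Lemmas~\ref{lem: empty graph not DSF} and \ref{lem: star and star complement not DSF}, to cut the candidate space down to a few thousand triples. Ensuring these class-membership and non-DSF arguments leave no gaps is the delicate part; once the candidate list is secured, Phase~II is a mechanical verification, and the small-graph case is routine.
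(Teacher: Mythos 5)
Your proposal is correct and follows essentially the same route as the paper: the small-graph case is settled by Lemma~\ref{lem: small graphs not in min DSF triples}, Lemma~\ref{lem: triples with K3}, and Corollary~\ref{cor: triples with K2K1}, with DSF-ness verified via Observation~\ref{obs: unigraph-producing} and Theorem~\ref{thm: unigraphic DSF} and minimality checked against Theorem~\ref{thm: pairs list}, while the large-graph case is handled by the Phase~I/Phase~II search constrained by Corollary~\ref{cor: bounds when k is 3}, Theorems~\ref{thm: forest of stars} and~\ref{thm: DSF contains disjoint unions}, and Lemmas~\ref{lem: F1 and F2 satisfy all classes}, \ref{lem: empty graph not DSF}, and~\ref{lem: star and star complement not DSF}. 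The paper's proof is organized identically, even down to noting that Phase~II could alternatively certify the small triples, so no gap remains.
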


\begin{proof}
From the output of Phases~I and II of the computer search, the only minimal DSF sets containing graphs with at least four vertices are $\{K_3+K_1, C_4, P_3+K_1\}$ and $\{K_{1,3}, 2K_2, \paw\}$.  Note that Phase~II provides a proof that these two triples are in fact DSF sets and ensures that they are minimal.

By Lemma~\ref{lem: triples with K3} and Corollary~\ref{cor: triples with K2K1}, the triples listed above are the only sets containing a graph with fewer than four vertices that can be DSF.  To show that each of these triples is DSF, it suffices to note that the condition in Theorem~\ref{thm: unigraphic DSF} holds.  Observe that none of these triples contain a DSF singleton or pair from Theorem~\ref{thm: pairs list} and hence all are minimal DSF triples.
\end{proof}

\section{Open questions} \label{sec: conclusion}

Using the results of Section~\ref{sec: minimal properties} on minimal DSF sets, we have generated the list of all minimal DSF triples in Theorem~\ref{thm: tripleslist}. A similar approach, again using an exhaustive search algorithm, could be used to find all minimal DSF sets with $k$ elements for any fixed $k$. However, because of practical limits on computational power, new ideas will be necessary to deal with $k$ much larger than $3$. These ideas might result from a deeper understanding of DSF sets in general, and minimal DSF sets in particular. We describe some questions here.

In Section~\ref{sec: minimal properties}, we showed that every DSF set has to contain a graph from each of several hereditary families. This leads to our first question.

\begin{ques}
What other hereditary families must be represented by elements in every DSF set?
\end{ques}

It appears possible that even a complete answer to this question would not suffice to characterize minimal DSF sets. The authors have shown, for instance, that not only must a DSF set contain a forest of stars, but this forest must appear among the elements having smallest maximum degree.

\begin{ques}
What relationships must hold between an element of a (minimal) DSF set and the set as a whole?
\end{ques}

One answer to this question was given in Theorem~\ref{thm: DSF iff D condition holds}, where DSF sets were characterized in terms of the sets $\mathcal{D}(G)$ corresponding to their elements. However, as shown in Example~\ref{exa: K_3-sieve}, determining a set $\mathcal{D}(G)$ is not a trivial matter, so necessary conditions such as the result on forests of stars above are in some cases preferable. Such results might be used productively in algorithms similar to Algorithm~\ref{alg:find_potential_triples} to rule out candidate DSF sets.

Examining Theorems~\ref{thm: pairs list} and~\ref{thm: tripleslist}, it is interesting to note that of the minimal DSF sets with three or fewer elements, all but $\{2K_2,C_4\}$ satisfy the hypothesis of Observation~\ref{obs: unigraph-producing}, that is, forbidding the subgraphs in these sets produces a class of graphs containing only unigraphs. Indeed, Observation~\ref{obs: unigraph-producing} has thus far been the most useful of the few known sufficient conditions for DSF sets. However, in light of Theorem~\ref{thm: unigraphic DSF}, ``unigraph-generating'' DSF sets are relatively quite few; no minimal DSF set containing a graph with at least seven vertices can be the set of forbidden subgraphs for a hereditary family of unigraphs. This leads to another question.

\begin{ques}
What properties are shared by (minimal) DSF sets $\mathcal{F}$ for which ``most'' $\mathcal{F}$-free graphs are not unigraphs?
\end{ques}

And more generally, given the number of necessary conditions we have found for DSF sets, we ask for results of the opposite type.

\begin{ques}
What easily tested conditions are sufficient to ensure that a set is DSF?
\end{ques}

Finally, we conclude with questions on finiteness. We showed in Section~\ref{sec: minimal properties} that each graph belongs to a minimal DSF set, and Theorem~\ref{thm: bound on min DSF size} ties the number of graphs in such a set to the size of graphs in the set. It follows that there must be arbitrarily large minimal DSF sets. However, all examples the authors are aware of at present are finite. Must it be so?

\begin{ques}
Does there exist an infinite minimal DSF set?
\end{ques}

In particular, the proof of Theorem~\ref{thm: every graph in minimal} showed that for any graph $G$, the set $\mathcal{D}(G)$ is a minimal DSF set. What if we restricted our attention to these sets?

\begin{ques}
For any graph $G$, must $\mathcal{D}(G)$ be a finite set?
\end{ques}

\section*{Acknowledgements}

The authors thank Mohit Kumbhat for fruitful discussions.

\end{document}